\newtheorem{theorem}{Theorem}[section]
\newtheorem{lemma}[theorem]{Lemma}
\newtheorem{proposition}[theorem]{Proposition}
 \theoremstyle{definition}
\theoremstyle{remark}
\newtheorem{remark}[theorem]{Remark}
\numberwithin{equation}{section}
\begin{document}
\setlength{\baselineskip}{1.2\baselineskip}

\title[mixed complex Hessian equations]
{The Neumann Problem for a class of mixed complex Hessian equations}
\author{Chuanqiang Chen}
\address{School of Mathematics and Statistics, Ningbo University, Ningbo,  315211, P.R. China}
\email{chenchuanqiang@nbu.edu.cn}

\author{Li Chen}
\address{Faculty of Mathematics and Statistics, Hubei Key Laboratory of Applied Mathematics, Hubei University,  Wuhan 430062, P.R. China}
\email{chernli@163.com}

\author{Ni Xiang}
\address{Faculty of Mathematics and Statistics, Hubei Key Laboratory of Applied Mathematics, Hubei University,  Wuhan 430062, P.R. China}
\email{nixiang@hubu.edu.cn}
\thanks{Research of the first author was supported by NSFC No.11771396, and research of the second and the third authors was supported by funds from Hubei Provincial Department of Education Key Projects D20171004, D20181003 and the National Natural Science Foundation of China No.11971157.}

\begin{abstract}
In this paper, we consider the Neumann problem of  a class of mixed complex Hessian equations $\sigma_k(\partial \bar{\partial} u) = \sum\limits _{l=0}^{k-1} \alpha_l(x) \sigma_l (\partial \bar{\partial} u)$ with $\alpha_l$ positive and $2 \leq k \leq n$, and establish the global $C^1$ estimates and reduce the global second derivative estimate to the estimate of double normal second derivatives on the boundary. In particular, we can prove the global $C^2$ estimates and the existence theorems when $k=n$.

{\em Mathematical Subject Classification (2010):}  Primary 35J60, Secondary 35B45.

{\em Keywords:} Neumann problem, {\em a priori} estimate, mixed complex Hessian equation.

\end{abstract}

\maketitle
\bigskip

\section{Introduction}

\medskip
In this paper, we consider the Neumann problem for the following mixed complex Hessian equations
\begin{equation}\label{1.1}
\sigma_k(\partial \bar{\partial} u) = \sum _{l=0}^{k-1} \alpha_l(z) \sigma_l (\partial \bar{\partial} u),  ~~ z \in \Omega \subset \mathbb{C}^n,
\end{equation}
where $k\geq 2$, $\Omega \subset \mathbb{C}^n$ is a bounded domain, $\partial \bar{\partial} u = \{ \frac{\partial^2 u}{\partial z_i \partial \overline{z_j}} \}_{1 \leq i,j \leq n} $ is the complex Hessian matrix of the real valued function $u$, $\alpha_l(z)>0$ in $\overline{\Omega}$ with $l=0, 1, \cdots, k-1$, are given real valued positive functions in $\overline{\Omega}$, and for any $m = 1, \cdots, n$,
\begin{equation*}
\sigma_m(\partial \bar{\partial} u) = \sigma_m(\lambda(\partial \bar{\partial} u)) = \sum _{1 \le i_1 < i_2 <\cdots<i_m\leq n}\lambda_{i_1}\lambda_{i_2}\cdots\lambda_{i_m},
\end{equation*}
with $\lambda(\partial \bar{\partial} u) =(\lambda_1,\cdots,\lambda_n)$ being the eigenvalues of $\partial \bar{\partial} u$. We also set $\sigma_0=1$. Recall that the G{\aa}rding's cone is defined as
\begin{equation*}
\Gamma_k  = \{ \lambda  \in \mathbb{R}^n :\sigma _i (\lambda ) > 0,\forall 1 \le i \le k\}.
\end{equation*}
If $\lambda(\partial \bar{\partial} u) \in \Gamma_k$ for any $z \in \Omega$, we say $u$ is a $k$-admissible function.

The equation \eqref{1.1} is a general class of mixed Hessian equation. Specially, it is complex Monge-Amp\`{e}re equation when $k=n$, $\alpha_0(z) >0$ and $\alpha_1(z) = \cdots = \alpha_{n-1}(z) \equiv 0$, complex $k$-Hessian equation when $\alpha_0(z) >0$ and $\alpha_1(z) = \cdots = \alpha_{k-1}(z) \equiv 0$, and complex Hessian quotient equation when $\alpha_{m}(z) >0$ ($k-1\geq m>0$) and $\alpha_0(z)= \cdots= \alpha_{m-1}(z)= \alpha_{m+1}(z) = \cdots= \alpha_{k-1}(z) \equiv 0$.  This kind of equations is motivated from the study of many important geometric problems. For example, the problem of prescribing convex combination of area measures was proposed in \cite{S}, which leads to mixed Hessian equations of the form
\[
\sigma _k (\nabla ^2 u + uI_n ) + \sum\limits_{i = 0}^{k - 1} {\alpha _i \sigma _i (\nabla ^2 u + uI_n )}  = \phi (x), x \in  \mathbb{S}^n.
\]
The special Lagrangian equation introduced by Harvey-Lawson \cite{HL82} in the study of calibrated geometries is also a mixed type Hessian equation
\[
{\mathop{\rm Im}\nolimits} \det (I_{2n}  + \sqrt { - 1} D^2 u) = \sum\limits_{k = 0}^{[(n - 1)/2]} {( - 1)^k \sigma _{2k + 1} (D^2 u)}  = 0.
\]
Another important example is Fu-Yau equation in \cite{FY2007,FY2008} arising from the study of the Hull-Strominger system in theoretical physics, which is an equation that can be written as the linear combination of the first and the second elementary symmetric functions
\[
\sigma _1 (i\partial \overline \partial  (e^u  + \alpha 'e^{ - u} )) + \alpha '\sigma _2 (i\partial \overline \partial  u) = 0.
\]

For the Dirichlet problem of elliptic equations in $\mathbb{R}^n$, many results are well known. For example, the Dirichlet problem of the Laplace equation was studied in \cite{GT}. Caffarelli-Nirenberg-Spruck \cite{CNS84} and Ivochkina \cite{I87} solved the Dirichlet problem of the Monge-Amp\`{e}re equation. Caffarelli-Nirenberg-Spruck \cite{CNS85} solved the Dirichlet problem of the $k$-Hessian equation. For the general Hessian quotient equation, the Dirichlet problem was solved by Trudinger in \cite{T95}. Also, the Neumann or oblique derivative problem of partial differential equations  has been widely studied. For a priori estimates and the existence theorem of Laplace equation with Neumann boundary condition, we refer to the book \cite{GT}. Also, we can see the recent book written by Lieberman \cite{L13} for the Neumann or oblique derivative problem of linear and quasilinear elliptic equations. In 1986, Lions-Trudinger-Urbas solved the Neumann problem of the Monge-Amp\`{e}re equation in the celebrated paper \cite{LTU86}. For related results on the Neumann or oblique derivative problem for some class of fully nonlinear elliptic equations can be found in Urbas \cite{U95} and \cite{U96}. For the Neumann problem of $k$-Hessian equations,  Trudinger \cite{T87} established the existence theorem when the domain is a ball, and Ma-Qiu \cite{MQ15} and Qiu-Xia \cite{QX16} solved the strictly convex domain case. D.K. Zhang and the first author \cite{CZ16} solved the Neumann problem of general Hessian quotient equations. Jiang and Trudinger \cite{JT15,JT16,JT3}, studied the general oblique boundary problem for augmented Hessian equations with some regular conditions and concavity conditions.

Krylov in \cite{Kr} considered the Dirichlet problem of real case of \eqref{1.1}, and Guan-Zhang in \cite{GZ2019} considered the $(k-1)$-admissible solution without the sign of $\alpha_{k-1}$ and obtained the global $C^2$ estimates. Recently, the classical Neumann problem of real case of \eqref{1.1} is solved by the authors \cite{CCX19}.

The recent development of the Neumann boundary problem for real equation is a motivation for us to study the corresponding complex equation.

For the complex Monge-Amp\`{e}re equation, the Dirichlet problem is solved by Caffarelli-Kohn-Nirenberg-Spruck \cite{CNSK85}, and the Neumann problem is solved by S.Y. Li \cite{L94}. Recently, W. Wei and the first author obtained part results about the Neumann problem of complex Hessian quotient equations in \cite{CW17}.

Naturally, we want to know how about the Neumann problem of the mixed complex Hessian equation \eqref{1.1}. In this paper, we obtain the existence theorem as follows.
\begin{theorem} \label{th1.1}
Suppose that $\Omega \subset \mathbb{C}^n$ is a $C^4$ strictly convex domain, $2 \leq k \leq n$, $\nu$ is the outer unit normal vector of $\partial \Omega$, $\alpha_l(z) \in C^2(\overline{\Omega})$ with $l=0, 1, \cdots, k-1$ are positive real valued functions and $\varphi \in C^3(\partial \Omega)$ is a real valued function. Moreover, if $u \in C^4(\Omega)\cap C^3(\overline \Omega)$ is the $k$-admissible solution of
\begin{align} \label{1.2}
\left\{ \begin{array}{l}
\sigma _k (\partial \bar{\partial} u) = \sum\limits_{l=0}^{k-1} \alpha_l(z) \sigma_l (\partial \bar{\partial} u),  \quad \text{in} \quad \Omega,\\
D_\nu u = - \varepsilon u + \varphi(z),\qquad \text{on} \quad \partial \Omega,
 \end{array} \right.
\end{align}
for small $ \varepsilon >0$. Then we have
\begin{align*}
\sup_{\Omega} |u|  \leq C_0, \quad \sup_{\Omega} |D u|  \leq C_1,
\end{align*}
and
\begin{align*}
\sup_{\Omega} |D^2 u|  \leq C_2 (1+\max_{\partial \Omega} |D_{\nu \nu} u| ),
\end{align*}
where $C_0$ depends on $n$, $k$, $\varepsilon$, $ \text{diam} (\Omega)$, $\max\limits_{\partial \Omega} |\varphi|$ and $\sum\limits_{l=0}^{k-1} \sup\limits_\Omega \alpha_l$; $C_1$ depends on $n$, $k$, $\Omega$, $|\varphi|_{C^2}$, $\inf_\Omega \alpha_l$ and $|\alpha_l|_{C^1}$; and $C_2$ depends on $n$, $k$, $\Omega$, $|\varphi|_{C^3}$, $\inf_\Omega \alpha_l$ and $|\alpha_l|_{C^2}$.
\end{theorem}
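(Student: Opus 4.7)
The plan is to carry out the three a priori estimates in order, using the $k$-admissibility of $u$, the Robin-type boundary condition, and maximum principle arguments adapted to the linearized operator $\cL = F^{i\bar j}\partial_i\partial_{\bar j}$ of $F(\partial\bar\partial u)=\sigma_k(\partial\bar\partial u)-\sum_{l=0}^{k-1}\alpha_l(z)\sigma_l(\partial\bar\partial u)$. For the $C^0$ estimate, $k$-admissibility gives $\sigma_1(\partial\bar\partial u)>0$, hence the real Laplacian $\Delta u$ is strictly positive and $u$ attains its maximum on $\partial\Omega$; at a boundary maximum $z_+$ one has $D_\nu u(z_+)\geq 0$, which combined with the Robin condition yields $u(z_+)\leq \max_{\partial\Omega}|\varphi|/\varepsilon$. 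For the lower bound I would construct a plurisubharmonic barrier $\underline u(z)=A(|z|^2-R^2)-B$ with $\Omega\subset B_R$, tuning $A,B$ (in terms of $\sum_l\sup_\Omega\alpha_l$ and $\mathrm{diam}(\Omega)$) so that $\underline u$ is a subsolution of \eqref{1.1} and $D_\nu\underline u+\varepsilon\underline u\leq\varphi$ on $\partial\Omega$, and conclude $\underline u\leq u$ by a comparison principle compatible with the Robin condition.

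For the $C^1$ estimate, $|D_\nu u|$ on $\partial\Omega$ is immediate from the Robin condition and the $C^0$ bound. For the global bound I would apply the maximum principle to
\[
W=\log|Du|^2+\beta(u)+\gamma|z|^2,
\]
with $\beta,\gamma$ chosen so that, after differentiating \eqref{1.1} once and computing $\cL W$, the positive terms coming from $\gamma\sum_i F^{i\bar i}$ and $\beta'(u)\cL u$ dominate the unsigned error from $\sum_l(D\alpha_l)\sigma_l$; this uses ellipticity $(F^{i\bar j})>0$ on $\Gamma_k$ together with Newton--Maclaurin inequalities. If the maximum of $W$ sits on $\partial\Omega$, I would carry out a Hopf-type boundary computation using strict convexity of $\Omega$ and tangential differentiation of the Robin condition, in the spirit of \cite{LTU86} and \cite{MQ15}, to contradict the boundary maximum unless $|Du|$ is already controlled.

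For the reduction of the $C^2$ estimate I would maximize
\[
V(z,\xi)=\log u_{\xi\bar\xi}+\Phi(|Du|^2)+\Psi(u),\qquad z\in\overline{\Omega},\ |\xi|=1,
\]
so that $\max V$ controls $\lambda_{\max}(\partial\bar\partial u)$. If $\max V$ is attained in $\Omega$, diagonalizing $\partial\bar\partial u$ at the maximum, differentiating \eqref{1.1} twice, and invoking concavity of a suitable fractional power of $F$ on $\Gamma_k$ (inherited from the concavity of $\sigma_k^{1/k}$ together with $\alpha_l>0$) absorbs the unsigned third-derivative terms and bounds $\lambda_{\max}$ by $C(1+|u|_{C^1})$. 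If $\max V$ is attained on $\partial\Omega$, strict convexity of $\Omega$ and tangential differentiation of the Robin condition show that the components of $\partial\bar\partial u$ in tangential and tangent-normal directions are bounded by the $C^0$-$C^1$ data and the principal curvatures of $\partial\Omega$; the only direction not immediately controlled in this way is the double-normal one, giving the claimed estimate.

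The main obstacle is the mixed quotient structure of $F$: unlike $\sigma_k$ alone or a pure quotient $\sigma_k/\sigma_l$, the concavity input used at the interior maximum of $V$ has to be extracted simultaneously for every $l<k$ via the positivity of $\alpha_l$ and Newton--Maclaurin inequalities on $\Gamma_k$. Moreover, the complex Hessian introduces commutator terms absent from the real analogue \cite{CCX19} when \eqref{1.1} is differentiated twice, and these must be absorbed by the coercive parts of $\cL V$ uniformly in $l$; arranging this absorption while simultaneously tracking the boundary contributions produced by the Robin condition in both the $C^1$ and $C^2$ boundary analyses is the central technical difficulty.
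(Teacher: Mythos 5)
Your overall architecture ($C^0$ by subharmonicity plus a barrier, $C^1$ and the $C^2$-reduction by maximum principle for a global test function with a Hopf-lemma boundary analysis exploiting strict convexity) matches the paper's, but the specific test functions you choose create two genuine gaps. For the gradient estimate you take $W=\log|Du|^2+\beta(u)+\gamma|z|^2$ and defer the boundary case to ``a Hopf-type computation.'' That is exactly where the argument can fail: at a boundary maximum, $D_\nu W\ge 0$ produces the term $2D_\nu u\,D_{\nu\nu}u/|Du|^2$, and $D_{\nu\nu}u$ is not controlled at this stage (tangential differentiation of the Robin condition only controls $D_\nu D_\tau u$). The paper, following Li \cite{L94}, instead maximizes
\begin{equation*}
W(z,\xi)=D_\xi u-\langle\nu,\xi\rangle\bigl(-\varepsilon u+\varphi\bigr)+\varepsilon^2u^2+K|z|^2
\end{equation*}
over $(z,\xi)\in\overline\Omega\times\mathbb{S}^{2n-1}$; the correction term $-\langle\nu,\xi\rangle(-\varepsilon u+\varphi)$ makes the normal and non-tangential cases trivial (the first-order part cancels against the boundary condition), and in the tangential case $D_{\nu\nu}u$ never appears, with strict convexity ($\kappa_{\min}>0$) supplying the good term $-\kappa_{\min}W$. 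Your sketch is missing this mechanism, and it is the heart of the Neumann gradient estimate.

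For the second-derivative reduction there are two further problems. First, you propose to bound $\lambda_{\max}(\partial\bar\partial u)$ by maximizing $\log u_{\xi\bar\xi}$, but the theorem bounds the full real Hessian $|D^2u|$; a bound on the complex Hessian does not control $D_{\zeta\zeta}u$ for real directions $\zeta$ (a pluriharmonic perturbation is invisible to $\partial\bar\partial u$). The paper maximizes $Q(z,\zeta)=D_{\zeta\zeta}u-v(z,\zeta)+|Du|^2+K|z|^2$ over real directions $\zeta\in\mathbb{S}^{2n-1}$, where $v$ is again a boundary-adapted correction chosen so that $Q(z_0,\zeta_0)=\alpha^2Q(z_0,\tau)+\beta^2Q(z_0,\nu)$ in the non-tangential case; your $V$ has no such term, so that case would leave uncontrolled tangential-normal derivatives. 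Second, your concavity input (``a suitable fractional power of $F$'') is unsubstantiated: $F=\sigma_k-\sum_l\alpha_l\sigma_l$ is not concave, and no fractional power of it is known to be. The paper's key structural step (Lemma \ref{lem2.5}, after Krylov and Guan--Zhang) is to rewrite the equation as $G:=\sigma_k/\sigma_{k-1}-\sum_{l=0}^{k-2}\alpha_l\,\sigma_l/\sigma_{k-1}=\alpha_{k-1}$, where $G$ is elliptic and concave on $\Gamma_k$, together with the quantitative bounds of Lemmas \ref{lem2.6}--\ref{lem2.7} on $\sigma_l/\sigma_{k-1}$, $\sum G^{i\bar i}$ and $\sum G^{i\bar j}u_{i\bar j}$. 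Without this reformulation the interior maximum-principle computation does not close; and your stronger claim that an interior maximum of $V$ would yield $\lambda_{\max}\le C(1+|u|_{C^1})$ (a Pogorelov-type interior estimate) is not available for general $k$ and is not what the paper proves — it shows instead that the interior maximum simply cannot occur because $G^{i\bar i}\partial_{i\bar i}Q>0$ there.
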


\begin{remark} \label{rmk1.2}
In this paper, we always denote
\begin{align*}
&z = (z_1, \cdots, z_n) \in  \overline{\Omega}, \quad z_j = t_j + \sqrt{-1} t_{n+j}, \quad t = (t_1, \cdots, t_n, t_{n+1}, \cdots, t_{2n}); \\
&\partial_j u = \frac{\partial u}{\partial z_j} = u_{z_j}, \quad \partial_{\overline{j}} u = \frac{\partial u}{\partial \overline{z_j}}= u_{\overline{z_j}}, \quad \partial u = (\partial_1 u, \cdots, \partial_n u). \\
& D_k u = \frac{\partial u}{\partial t_k }, \quad Du=(D_1 u, \cdots, D_{2n} u),
\end{align*}
where $\sqrt{-1}$ is the imaginary unit. It is easy to see
\begin{align*}
&\partial_j u  = \frac{1}{2} [D_j u - \sqrt{-1} D_{n+j} u], \quad |\partial u|^2 = \langle \partial u, \partial u\rangle =\sum_{j=1}^n \partial_j u \overline{\partial_{j} u} = \frac{1}{4}|Du|^2 , \\
& \partial_{j\overline{j}} u  = \frac{1}{4}[D_{jj} u + D_{(n+j)(n+j)} u].
\end{align*}
\end{remark}

In particular, for $k=n$, we can obtain the estimate of double normal second derivatives, and obtain the existence theorem as follows.
\begin{theorem} \label{th1.3}
Suppose that $\Omega \subset \mathbb{C}^n$ is a $C^4$ strictly convex domain, $k =n$, $\nu$ is the outer unit normal vector of $\partial \Omega$, $\alpha_l(z) \in C^2(\overline{\Omega})$ with $l=0, 1, \cdots, n-1$ are positive real valued functions and $\varphi \in C^3(\partial \Omega)$ is a real valued function. Then there exists a unique plurisubharmonic solution $u \in C^{3, \alpha}(\overline \Omega)$ for the Neumann problem of mixed complex Hessian equations \eqref{1.2} with $k=n$.
\end{theorem}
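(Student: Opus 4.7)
The plan is to combine Theorem~\ref{th1.1} with a boundary estimate for the pure double-normal second derivative special to the case $k=n$, and then run a continuity method to obtain existence. By Theorem~\ref{th1.1} we already have $\|u\|_{C^1(\overline\Omega)}\le C$ together with the reduction
\[
\sup_\Omega |D^2 u|\le C_2\bigl(1+\max_{\partial\Omega}|D_{\nu\nu}u|\bigr),
\]
so the only missing a priori estimate is a bound on $|D_{\nu\nu}u|$ along $\partial\Omega$. This is the single step that distinguishes Theorem~\ref{th1.3} from Theorem~\ref{th1.1}.

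To obtain the double-normal bound when $k=n$, I would fix $z_0\in\partial\Omega$ and choose holomorphic coordinates so that $\nu(z_0)$ is a coordinate direction. The pure tangential second derivatives of $u$ at $z_0$ are controlled by differentiating the Neumann condition $D_\nu u=-\varepsilon u+\varphi$ twice along $\partial\Omega$ and using $\|u\|_{C^1}$, $\|\varphi\|_{C^2}$, and the second fundamental form of $\partial\Omega$; the mixed tangential-normal second derivatives are handled by a standard barrier argument exploiting the strict convexity of $\Omega$, exactly as in Lions-Trudinger-Urbas \cite{LTU86} and its complex analogues S.Y.~Li \cite{L94} and Chen-Wei \cite{CW17}. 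To close the estimate, one expands each $\sigma_l(\partial\bar\partial u)$ by cofactors in the normal direction: $\sigma_l$ depends linearly on $u_{\nu\bar\nu}$ with coefficient a tangential principal minor, and all other entries appearing in the cofactors are already controlled. At $z_0$ the equation \eqref{1.2} therefore becomes a single linear equation for $u_{\nu\bar\nu}$ whose leading coefficient is $\det(A)-\sum_{l=0}^{n-1}\alpha_l\,\sigma_{l-1}(A)$, where $A$ is the tangential $(n{-}1)\times(n{-}1)$ block of $\partial\bar\partial u$. By $n$-admissibility we have $A>0$ with a quantitative lower bound on its eigenvalues coming from the already-obtained tangential second derivative control; together with the positivity of $\alpha_0$ this forces the coefficient to have a definite sign (after adjusting by a barrier if necessary), producing an upper bound on $u_{\nu\bar\nu}$. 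The lower bound on $u_{\nu\bar\nu}$ is automatic from $\lambda(\partial\bar\partial u)\in\Gamma_n$ and the upper bound on the remaining entries.

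With the full $C^2$ estimate, equation \eqref{1.2} is uniformly elliptic. Rewriting it in the concave form already used by Krylov \cite{Kr} and Guan-Zhang \cite{GZ2019} (this is structurally why the restriction $k=n$ is imposed: $\log\sigma_n$ behaves favorably against the lower-order $\sigma_l$ terms on the cone $\Gamma_n$), the complex Evans-Krylov theorem gives interior $C^{2,\alpha}$ estimates and the corresponding boundary regularity theory for oblique-derivative problems on concave fully nonlinear equations upgrades this to $u\in C^{2,\alpha}(\overline\Omega)$. A single application of Schauder theory to the linearized oblique problem, which now has $C^\alpha$ coefficients and $C^{3,\alpha}$ boundary data $-\varepsilon u+\varphi$, yields $u\in C^{3,\alpha}(\overline\Omega)$. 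Existence is then obtained by the method of continuity applied to a family $\sigma_n(\partial\bar\partial u)=t\sum_{l=0}^{n-1}\alpha_l\sigma_l+(1-t)(\text{trivial right-hand side})$, with openness coming from Fredholm theory for the linearized oblique problem, whose invertibility is guaranteed by the $-\varepsilon u$ term in the boundary condition (which kills the constant kernel), and closedness from the a priori $C^{3,\alpha}$ bound. Uniqueness follows from the maximum principle applied to the linearized operator, again using $\varepsilon>0$.

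The principal obstacle is the double-normal boundary estimate. The cofactor trick that closes it depends crucially on the fact that, for $k=n$, the equation becomes genuinely linear in $u_{\nu\bar\nu}$ after the other boundary second derivatives are controlled; for $k<n$ the analogous expansion of $\sigma_k$ loses the clean sign of the coefficient of $u_{\nu\bar\nu}$, which is precisely why Theorem~\ref{th1.1} only reduces the $C^2$ bound to this boundary quantity without estimating it in the general case.
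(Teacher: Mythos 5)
The skeleton of your argument (reduce everything to $\max_{\partial\Omega}|D_{\nu\nu}u|$, then Evans--Krylov/Lieberman--Trudinger $C^{2,\alpha}$ regularity, Schauder, method of continuity, Hopf lemma for uniqueness) matches the paper, and your final paragraph on regularity, existence and uniqueness is essentially Subsection 4.2 of the paper. But the one genuinely new step --- the double normal estimate --- is where your proposal breaks down, and it breaks down for a structural reason. You propose the \emph{Dirichlet}-problem strategy: control tangential second derivatives from the boundary data, control mixed derivatives by a barrier, then solve the equation for $u_{\nu\bar\nu}$ by a cofactor expansion. In the Neumann problem the first step is unavailable: differentiating $D_\nu u=-\varepsilon u+\varphi$ twice tangentially yields an identity for $D_{\tau\tau}D_\nu u$ (a third derivative), not for $D_{\tau\tau}u$; the pure tangential second derivatives on $\partial\Omega$ are \emph{not} determined by the boundary condition. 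The only bound on them available at this stage is Theorem \ref{th3.3}, i.e. $C(1+\max_{\partial\Omega}|D_{\nu\nu}u|)$, so your cofactor step is circular. Moreover, even granting tangential control, the cofactor argument needs a \emph{positive lower} bound on $\det(A)-\sum_l\alpha_l\sigma_{l-1}(A)$ (equivalently on the smallest eigenvalue of the tangential block $A$), and an upper bound on $|D^2u|$ plus $\lambda(\partial\bar\partial u)\in\Gamma_n$ gives no such thing; obtaining that lower bound is the hard part of the Caffarelli--Nirenberg--Spruck Dirichlet argument and you simply assert it ("after adjusting by a barrier if necessary").

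What the paper actually does (Theorem \ref{th4.1}, following S.Y. Li \cite{L94} and Lions--Trudinger--Urbas \cite{LTU86}) avoids tangential second derivatives entirely: near a boundary point $z_0$ one passes to holomorphic coordinates $z'=\psi(z)$ adapted to a strictly plurisubharmonic defining function $r$, rewrites the Neumann condition as $4\,\mathrm{Re}\bigl(\langle\partial_{z'}u,\partial_{z'}r_0\rangle-\sum_m A_m\partial_{z_m'}u\bigr)=\phi+O(|z'|^2)$, and forms the barrier $h=4\,\mathrm{Re}[\cdots]-\phi+Kr-K_1\mathrm{Re}(z_n')$. One shows $F^{i\bar j}\partial_{z_i'\overline{z_j'}}h\ge 0$ on $\psi(B(0,\delta)\cap\Omega)$ for $K$ large, so $h$ attains its maximum at $z'=0$, and then $D_\nu h(0)\ge 0$ reads off the bound on $D_{\nu\nu}u(z_0)$ directly. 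The restriction $k=n$ enters precisely in making the terms of the form $G^{p\bar q}u_{z_m'\overline{z_q}}$ in that computation controllable (for Monge--Amp\`ere the cofactor identity $\sigma_n^{p\bar q}u_{q\bar m}=\sigma_n\delta_{pm}$ is available), not --- as you suggest --- because $\sigma_n$ is "linear in $u_{\nu\bar\nu}$" (every $\sigma_k$ is) or because concavity requires it (Lemma \ref{lem2.5} gives concavity of $G$ for all $k$). You would need to replace your second paragraph by this barrier construction for the proof to go through.
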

Also, we can obtain the existence theorem for the corresponding classical Neumann problem of mixed complex Hessian equation with $k=n$.
\begin{theorem}\label{th1.4}
Suppose that $\Omega \subset \mathbb{C}^n$ is a $C^4$ strictly convex domain, $k= n$, $\nu$ is the outer unit normal vector of $\partial \Omega$, $\alpha_l(z) \in C^2(\overline{\Omega})$ with $l=0, 1, \cdots, n-1$ are positive real valued functions and $\varphi \in C^3(\partial \Omega)$ is a real valued function. Then there exists a unique constant $c$, such that the Neumann problem of mixed complex Hessian equations
\begin{align} \label{1.4}
\left\{ \begin{array}{l}
\sigma _n (\partial \bar{\partial} u) = \sum\limits_{l=0}^{n-1} \alpha_l(z) \sigma_l (\partial \bar{\partial} u),  \quad \text{in} \quad \Omega,\\
D_\nu u= c + \varphi(z), \quad \text{on} \quad \partial \Omega,
 \end{array} \right.
\end{align}
has plurisubharmonic solutions $u \in C^{3, \alpha}(\overline \Omega)$, which are unique up to a constant.
\end{theorem}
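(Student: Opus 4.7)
The plan is to recover a solution of \eqref{1.4} as an $\varepsilon \to 0^+$ limit of the solutions of \eqref{1.2} supplied by Theorem~\ref{th1.3}. Fix a reference point $z_0\in\overline{\Omega}$; for each small $\varepsilon>0$, let $u_\varepsilon\in C^{3,\alpha}(\overline{\Omega})$ be the plurisubharmonic solution produced by Theorem~\ref{th1.3}, and set
\[
v_\varepsilon := u_\varepsilon - u_\varepsilon(z_0), \qquad c_\varepsilon := -\varepsilon u_\varepsilon(z_0).
\]
Then $v_\varepsilon$ is an $n$-admissible solution of the same interior PDE and satisfies $D_\nu v_\varepsilon = c_\varepsilon + \varphi(z) - \varepsilon v_\varepsilon$ on $\partial\Omega$.

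The first task is to extract $\varepsilon$-independent a priori bounds on the pair $(v_\varepsilon,c_\varepsilon)$. Since each $u_\varepsilon$ is strictly plurisubharmonic, hence subharmonic, both its maximum and its minimum over $\overline{\Omega}$ are attained on $\partial\Omega$; evaluating the oblique condition $D_\nu u_\varepsilon = -\varepsilon u_\varepsilon + \varphi$ at these extrema yields $\varepsilon \sup_{\overline{\Omega}}|u_\varepsilon| \le \max_{\partial\Omega}|\varphi|$, so in particular $|c_\varepsilon| \le \max_{\partial\Omega}|\varphi|$ uniformly in $\varepsilon$. The constants $C_1$ and $C_2$ appearing in the gradient and Hessian estimates of Theorem~\ref{th1.1} have no $\varepsilon$-dependence, and the boundary double normal estimate contained in Theorem~\ref{th1.3} for the case $k=n$ upgrades this to a uniform $C^2(\overline{\Omega})$ bound on $u_\varepsilon$, hence on $v_\varepsilon$. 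Together with $v_\varepsilon(z_0)=0$, the gradient bound gives $|v_\varepsilon| \le C\operatorname{diam}(\Omega)$, a uniform $C^0$ bound.

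With $\|v_\varepsilon\|_{C^2(\overline{\Omega})} \le C$ in hand, the equation can be rewritten in the concave form
\[
\log \sigma_n(\partial\bar{\partial} v_\varepsilon) = \log \Bigl(\sum_{l=0}^{n-1} \alpha_l(z)\,\sigma_l(\partial\bar{\partial} v_\varepsilon)\Bigr),
\]
which is uniformly elliptic with concave principal part. Evans--Krylov together with its oblique boundary analogue then provides a uniform $C^{2,\alpha}(\overline{\Omega})$ estimate. Passing to a subsequence $\varepsilon_j \to 0$, the Arzel\`a--Ascoli theorem produces $v_{\varepsilon_j} \to v$ in $C^{2,\alpha'}(\overline{\Omega})$ and $c_{\varepsilon_j} \to c \in \mathbb{R}$, and the limit $(v,c)$ solves \eqref{1.4}. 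Schauder theory upgrades $v$ to $C^{3,\alpha}(\overline{\Omega})$.

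Uniqueness of $c$ and of the solution up to an additive constant follows from a standard comparison argument. Given two solutions $(u_1,c_1)$ and $(u_2,c_2)$, integrating the linearization of the operator $\sigma_n - \sum_l \alpha_l \sigma_l$ along the segment joining $\partial\bar{\partial}u_2$ to $\partial\bar{\partial}u_1$ expresses $w:=u_1-u_2$ as a solution of a linear uniformly elliptic equation $Lw=0$ on $\Omega$ with no zeroth-order term. The strong maximum principle together with Hopf's boundary point lemma, applied at a boundary extremum of $w$ and combined with $D_\nu w = c_1-c_2$, force $c_1=c_2$ and $w\equiv \text{const}$. The principal obstacle I anticipate is the uniform control of $|D_{\nu\nu} u_\varepsilon|_{\partial\Omega}$, which crucially uses the $k=n$ structure: since $\sigma_n = \det$, one can solve algebraically for $D_{\nu\nu} u$ in terms of the tangential boundary data from the equation itself, a maneuver unavailable for $k<n$ and precisely the reason Theorem~\ref{th1.3} is restricted to the complex Monge--Amp\`ere-type case.
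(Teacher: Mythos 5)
Your proposal is correct and follows essentially the same route as the paper: solve the penalized problem \eqref{1.2} via Theorem~\ref{th1.3}, normalize (the paper subtracts the mean value $\frac{1}{|\Omega|}\int_\Omega u^\varepsilon$ rather than the value at a point, which is immaterial), use the $\varepsilon$-independent estimates of Section~3 together with the double normal estimate to pass to the limit $\varepsilon\to 0$, and prove uniqueness of $c$ and of $u$ up to a constant by the maximum principle and Hopf's lemma applied to the linearized equation. One small slip: the minimum of a subharmonic function need not be attained on $\partial\Omega$, so your derivation of the lower half of the bound $\varepsilon\sup_{\overline{\Omega}}|u_\varepsilon|\le \max_{\partial\Omega}|\varphi|$ is not valid as written; the uniform bound $|\varepsilon u_\varepsilon|\le M_0$ is nevertheless exactly Theorem~\ref{th3.1}, whose lower bound is obtained instead by comparison with the barrier $A|z-z_1|^2$.
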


The rest of this paper is organized as follows. In Section 2, collect some properties of the elementary symmetric function $\sigma_k$, and establish some key lemmas. In Section 3, we prove Theorem \ref{th1.1}. In Section 4, we prove Theorem \ref{th1.3} and \ref{th1.4}.

\section{Preliminaries}

In this section, we give some basic properties of elementary symmetric functions, which could be found in
\cite{L96}, and establish some key lemmas.

\subsection{Basic properties of elementary symmetric functions}

First, we denote by $\sigma _m (\lambda \left| i \right.)$ the symmetric
function with $\lambda_i = 0$ and $\sigma _m (\lambda \left| ij
\right.)$ the symmetric function with $\lambda_i =\lambda_j = 0$.
\begin{proposition}\label{prop2.1}
Let $\lambda=(\lambda_1,\dots,\lambda_n)\in\mathbb{R}^n$ and $m
= 1, \cdots,n$, then
\begin{align*}
&\sigma_m(\lambda)=\sigma_m(\lambda|i)+\lambda_i\sigma_{m-1}(\lambda|i), \quad \forall \,1\leq i\leq n,\\
&\sum_i \lambda_i\sigma_{m-1}(\lambda|i)=m\sigma_{m}(\lambda),\\
&\sum_i\sigma_{m}(\lambda|i)=(n-m)\sigma_{m}(\lambda).
\end{align*}
\end{proposition}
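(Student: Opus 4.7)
The plan is to prove the three identities one at a time, using only the definition of $\sigma_m(\lambda)$ as the sum over $m$-element subsets of $\{1,\ldots,n\}$ of the corresponding monomials $\lambda_{i_1}\lambda_{i_2}\cdots\lambda_{i_m}$, together with elementary counting. The first identity is the key combinatorial decomposition; the other two follow from it by summing over $i$ and reinterpreting the resulting multiplicities.

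For the first identity, I would fix an index $i$ and partition the family of $m$-element subsets $S \subset \{1,\ldots,n\}$ according to whether $i \in S$ or $i \notin S$. The subsets with $i \notin S$ contribute exactly $\sigma_m(\lambda\mid i)$, since setting $\lambda_i = 0$ eliminates precisely those monomials involving $\lambda_i$. The subsets with $i \in S$ can be written as $S = \{i\} \cup S'$ with $S'$ an $(m-1)$-element subset of $\{1,\ldots,n\}\setminus\{i\}$, and their combined contribution factors as $\lambda_i \sigma_{m-1}(\lambda\mid i)$. Adding the two pieces gives the claimed decomposition.

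For the second identity, I would sum the first identity over $i$ in a different way: directly expand $\sum_i \lambda_i \sigma_{m-1}(\lambda\mid i)$ and observe that every monomial $\lambda_{i_1}\cdots\lambda_{i_m}$ appearing in $\sigma_m(\lambda)$ is produced exactly $m$ times, once for each choice of $i \in \{i_1,\ldots,i_m\}$ (with $\lambda_i$ played against the product of the remaining $m-1$ factors, which is a term of $\sigma_{m-1}(\lambda\mid i)$). For the third identity, the same type of argument applies: $\sum_i \sigma_m(\lambda\mid i)$ counts each monomial $\lambda_{i_1}\cdots\lambda_{i_m}$ once for every $i \notin \{i_1,\ldots,i_m\}$, and there are $n-m$ such indices, yielding the factor $(n-m)$.

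There is no real obstacle here; the only thing to be careful about is the bookkeeping of multiplicities in the two counting arguments, and checking the boundary cases (e.g.\ $m=n$ in the third identity, which correctly gives $0$, and $m=1$ in the second, which reduces to $\sum_i \lambda_i = \sigma_1(\lambda)$). Since the statement is purely combinatorial and holds for all real $\lambda$, no analytic input is needed.
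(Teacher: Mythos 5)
Your argument is correct and complete: the paper itself gives no proof of Proposition \ref{prop2.1}, merely citing the literature, and your combinatorial decomposition (splitting $m$-subsets by whether they contain $i$, then counting multiplicities $m$ and $n-m$ for the two summed identities) is exactly the standard derivation that such references contain. Nothing is missing; the boundary-case checks you mention are a nice sanity check but not logically required.
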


We also denote by $\sigma _m (W \left|
i \right.)$ the symmetric function with $W$ deleting the $i$-row and
$i$-column and $\sigma _m (W \left| ij \right.)$ the symmetric
function with $W$ deleting the $i,j$-rows and $i,j$-columns. Then
we have the following identities.
\begin{proposition}\label{prop2.2}
Suppose $W=(W_{ij})$ is diagonal, and $m$ is a positive integer,
then
\begin{align*}
\frac{{\partial \sigma _m (W)}} {{\partial W_{ij} }} = \begin{cases}
\sigma _{m - 1} (W\left| i \right.), &\text{if } i = j, \\
0, &\text{if } i \ne j.
\end{cases}
\end{align*}
\end{proposition}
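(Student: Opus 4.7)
The plan is to prove Proposition~\ref{prop2.2} by a direct unraveling of the determinantal description of $\sigma_m$. The key observation is that for an $n\times n$ matrix $W$, the $m$-th elementary symmetric function of its eigenvalues can be written as the sum of all principal $m\times m$ minors:
\[
\sigma_m(W) \;=\; \sum_{\substack{I \subset \{1,\dots,n\}\\ |I|=m}} \det(W_I),
\]
where $W_I$ denotes the $m\times m$ submatrix of $W$ with rows and columns indexed by $I$. From this formula, the partial derivative $\partial \sigma_m(W)/\partial W_{ij}$ is the sum over all such $I$ with $i,j \in I$ of the signed $(i,j)$-cofactor of $W_I$.

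First I would handle the diagonal case $i=j$. When $W$ is diagonal, $\det(W_I) = \prod_{l\in I} W_{ll}$, so differentiating with respect to $W_{ii}$ gives $\prod_{l\in I\setminus\{i\}} W_{ll}$ whenever $i\in I$ and $0$ otherwise. Summing over all index sets $I$ of size $m$ containing $i$ precisely yields
\[
\sum_{\substack{I\ni i\\ |I|=m}} \prod_{l\in I\setminus\{i\}} W_{ll} \;=\; \sigma_{m-1}(W|i),
\]
which is the first case of the identity.

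Next I would handle the off-diagonal case $i\neq j$. Here the only minors that contribute are those with $i,j\in I$, and for each such $I$ the derivative is (up to a sign) the determinant of the $(m-1)\times(m-1)$ matrix obtained from $W_I$ by deleting row $i$ and column $j$. In this smaller matrix, the row originally indexed by $j$ survives while the column indexed by $j$ is removed; since $W$ is diagonal, every entry of that surviving row is $W_{jl}$ with $l\neq j$, hence all zero. A zero row forces the determinant to vanish, so every cofactor contribution is $0$ and the total derivative is $0$.

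I do not anticipate a genuine obstacle: the argument is essentially bookkeeping with cofactors, and the hypothesis that $W$ is diagonal is used exactly to kill the off-diagonal cofactors via the zero-row observation. The only point that requires a moment of care is checking that, after deleting row $i$ and column $j$ from $W_I$ (with $i\neq j$, both in $I$), it is indeed the row originally labeled $j$ (and not its diagonal entry, which lives in the deleted column) that becomes identically zero; once that is verified the conclusion is immediate.
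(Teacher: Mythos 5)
Your proof is correct. The paper offers no proof of Proposition~\ref{prop2.2} at all (it is quoted as a standard identity from the references), and your derivation via the principal-minor expansion $\sigma_m(W)=\sum_{|I|=m}\det(W_I)$ together with the cofactor formula for $\partial\det/\partial W_{ij}$ is the standard argument; both cases are handled correctly, and the zero-row observation for $i\neq j$ is exactly the point that needed checking.
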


Recall that the G{\aa}rding's cone is defined as
\begin{equation}\label{2.1}
\Gamma_m  = \{ \lambda  \in \mathbb{R}^n :\sigma _i (\lambda ) >
0,\forall 1 \le i \le m\}.
\end{equation}

\begin{proposition}\label{prop2.3}
Let $\lambda=(\lambda_1,\dots,\lambda_n) \in \Gamma_m$ and $m \in \{1,2, \cdots, n\}$. Suppose that
$$
\lambda_1 \geq \cdots \geq \lambda_m \geq \cdots \geq \lambda_n,
$$
then we have
\begin{align}
\label{2.2}& \sigma_{m-1} (\lambda|n) \geq \sigma_{m-1} (\lambda|n-1) \geq \cdots \geq \sigma_{m-1} (\lambda|m) \geq \cdots \geq \sigma_{m-1} (\lambda|1) >0; \\
\label{2.3}& \lambda_1 \geq \cdots \geq \lambda_m >  0, \quad \sigma _m (\lambda)\leq C_n^m  \lambda_1 \cdots \lambda_m; \\
\label{2.4}& \lambda _1 \sigma _{m - 1} (\lambda |1) \geq \frac{m} {{n}}\sigma _m (\lambda); \\
\label{2.5}& \sigma _{m - 1} (\lambda |m) \geq c(n,m)\sigma_{m-1} (\lambda);
\end{align}
where $C_n^m = \frac{n!}{m! (n-m)!}$.
\end{proposition}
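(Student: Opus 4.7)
My plan is to prove (2.2)--(2.5) sequentially, relying on Proposition~\ref{prop2.1} and the \emph{truncation principle} for the G{\aa}rding cone: for an ordered $\lambda\in\Gamma_m$ with $\lambda_1\geq\cdots\geq\lambda_n$, the truncation $(\lambda_2,\dots,\lambda_n)$ lies in $\Gamma_{m-1}$ of dimension $n-1$, and iterating yields $\sigma_{m-j}(\lambda|i_1\cdots i_j)>0$ when the $j$ largest indices are removed. I will cite this as a standard lemma from \cite{L96}. The main algebraic tool is the pair-swap identity $\sigma_p(\lambda|i)-\sigma_p(\lambda|j)=(\lambda_j-\lambda_i)\sigma_{p-1}(\lambda|ij)$, obtained by applying Proposition~\ref{prop2.1} twice.

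For (2.2), applying the pair-swap at level $p=m-1$ with $i>j$ gives $\lambda_j-\lambda_i\geq 0$, and $\sigma_{m-2}(\lambda|ij)\geq 0$ follows from the truncation principle (after monotonic swaps reducing to extremal removals); strict positivity $\sigma_{m-1}(\lambda|1)>0$ is immediate from $(\lambda_2,\dots,\lambda_n)\in\Gamma_{m-1}$. For (2.3), the positivity $\lambda_m>0$ follows by combining $\sigma_{m-1}(\lambda|m)>0$ (from (2.2)) with a sign analysis of $\sigma_m=\lambda_m\sigma_{m-1}(\lambda|m)+\sigma_m(\lambda|m)$. The bound $\sigma_m(\lambda)\leq C_n^m\lambda_1\cdots\lambda_m$ I prove by induction on $n$: for $n=m$ it is an equality; for $n>m$, the recursion $\sigma_m(\lambda)=\sigma_m(\lambda|n)+\lambda_n\sigma_{m-1}(\lambda|n)$, the truncation $\lambda|n\in\Gamma_m$ of dimension $n-1$, and Pascal's identity $C_n^m=C_{n-1}^m+C_{n-1}^{m-1}$ close the induction (if $\lambda_n<0$, the second term only helps).

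For (2.4), starting from $\lambda_1\sigma_{m-1}(\lambda|1)=\lambda_i\sigma_{m-1}(\lambda|i)+(\lambda_1-\lambda_i)\sigma_{m-1}(\lambda|1i)$ (obtained by combining Proposition~\ref{prop2.1} at $i=1$ and at general $i$ with the level-$m$ pair-swap), summing over $i$ and using $\sum_i\lambda_i\sigma_{m-1}(\lambda|i)=m\sigma_m(\lambda)$ reduces the claim to
\[
\sum_{i=2}^n(\lambda_1-\lambda_i)\sigma_{m-1}(\lambda|1i)\geq0,
\]
which follows by a weighted regrouping of positive and negative summands (using $\lambda_1\geq\lambda_i$ and the ordering) or, more cleanly, by invoking concavity of $\sigma_m^{1/m}$ on $\Gamma_m$. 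For (2.5), I use $\sigma_{m-1}(\lambda)=\sigma_{m-1}(\lambda|m)+\lambda_m\sigma_{m-2}(\lambda|m)$ with $(\lambda|m)\in\Gamma_{m-1}$ in dimension $n-1$ (justified by iterating (2.2) at lower levels to obtain $\sigma_j(\lambda|m)>0$ for $j\leq m-1$), and apply the Newton--Maclaurin inequality in dimension $n-1$ to bound $\lambda_m\sigma_{m-2}(\lambda|m)\leq C(n,m)\sigma_{m-1}(\lambda|m)$, which after rearrangement yields the desired $c(n,m)>0$.

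The main obstacle I anticipate is (2.5): unlike the other three statements, it is not a direct algebraic consequence of Proposition~\ref{prop2.1}, but requires both the \emph{intermediate}-index truncation $(\lambda|m)\in\Gamma_{m-1}$ (not immediate from the extremal truncation principle, and needing an inductive use of (2.2) at lower cone levels) and a Newton--Maclaurin input to extract the explicit universal constant. A secondary subtle point, in (2.4), is ensuring the residual sum $\sum(\lambda_1-\lambda_i)\sigma_{m-1}(\lambda|1i)$ is nonnegative even when some $\sigma_{m-1}(\lambda|1i)$ are negative; the cleanest route is via concavity of $\sigma_m^{1/m}$, itself a nontrivial G{\aa}rding-type theorem that I would take as known.
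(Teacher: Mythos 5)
Your write-up is necessarily more ambitious than the paper's, which offers no proof at all and simply cites \cite{L96,HS99} for \eqref{2.2}, \cite{L91} for \eqref{2.3}, \cite{CW01,HMW10} for \eqref{2.4} and \cite{LT94} for \eqref{2.5}; so the only question is whether your sketch actually closes the four inequalities. Parts \eqref{2.2} and \eqref{2.3} are essentially sound: the pair-swap identity together with $\sigma_{m-2}(\lambda|ij)>0$ gives \eqref{2.2}, and your induction for \eqref{2.3} works once you split on the sign of $\lambda_n$. (Note the assertion ``$(\lambda|n)\in\Gamma_m$ in dimension $n-1$'' is false in general --- $(1,1)\in\Gamma_2$ while $(1)\notin\Gamma_2(\mathbb{R}^1)$ --- it holds precisely when $\lambda_n<0$, which is the only case where you need the induction; when $\lambda_n\geq0$ all entries are nonnegative and the bound is term-by-term trivial. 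Your route to $\lambda_m>0$ via a ``sign analysis'' of $\sigma_m(\lambda|m)$ is also shakier than the standard one, which iterates the truncation down to $\sigma_1(\lambda_m,\dots,\lambda_n)>0$.)

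The genuine gaps are in \eqref{2.4} and \eqref{2.5}. For \eqref{2.4} your reduction to $\sum_{i\geq2}(\lambda_1-\lambda_i)\sigma_{m-1}(\lambda|1i)\geq0$ is correct, but neither proposed finish works: the summands are not individually nonnegative (for $\lambda=(5,2,-1)\in\Gamma_2$ one has $(\lambda_1-\lambda_2)\sigma_1(\lambda|12)=3\cdot(-1)<0$), no concrete ``regrouping'' is specified, and concavity of $\sigma_m^{1/m}$ only delivers the unweighted monotonicity $\sigma_{m-1}(\lambda|1)\leq\sigma_{m-1}(\lambda|i)$, i.e.\ \eqref{2.2} again, not the weighted sum you need. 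The standard argument is different: write $\sigma_m(\lambda)=\lambda_1\sigma_{m-1}(\lambda|1)+\sigma_m(\lambda|1)$ and show $\sigma_m(\lambda|1)\leq\frac{n-m}{n}\sigma_m(\lambda)$ --- trivially if $\sigma_m(\lambda|1)\leq0$, and otherwise by observing that then $(\lambda|1)\in\Gamma_m(\mathbb{R}^{n-1})$ and applying Maclaurin there together with $\sigma_1(\lambda|1)\leq(n-1)\lambda_1$. For \eqref{2.5}, the step ``$\lambda_m\sigma_{m-2}(\lambda|m)\leq C(n,m)\sigma_{m-1}(\lambda|m)$ by Newton--Maclaurin'' is unsupported and in fact circular: by your own decomposition $\sigma_{m-1}(\lambda)=\sigma_{m-1}(\lambda|m)+\lambda_m\sigma_{m-2}(\lambda|m)$, that bound is equivalent to \eqref{2.5} itself, while Maclaurin applied to $(\lambda|m)$ controls $\sigma_{m-1}/\sigma_{m-2}$ from \emph{above} (by a multiple of $\sigma_1$), which is the wrong direction. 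Here you genuinely need the separate argument of \cite{LT94}, e.g.\ combining \eqref{2.3} at level $m-1$ with a lower bound of the form $\sigma_{m-1}(\lambda|m)\geq c\,\lambda_1\cdots\lambda_{m-1}$; that is where the real work lies, and it is missing from the proposal.
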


\begin{proof}
All the properties are well known. For example, see \cite{L96} or \cite{HS99} for a proof of \eqref{2.2},
\cite{L91} for \eqref{2.3}, \cite{CW01} or \cite{HMW10} for \eqref{2.4}, and \cite{LT94} for \eqref{2.5}.
\end{proof}

The generalized Newton-MacLaurin inequality is as follows, which will be used all the time.
\begin{proposition}\label{prop2.4}
For $\lambda \in \Gamma_m$ and $m > l \geq 0$, $ r > s \geq 0$, $m \geq r$, $l \geq s$, we have
\begin{align}
\Bigg[\frac{{\sigma _m (\lambda )}/{C_n^m }}{{\sigma _l (\lambda )}/{C_n^l }}\Bigg]^{\frac{1}{m-l}}
\le \Bigg[\frac{{\sigma _r (\lambda )}/{C_n^r }}{{\sigma _s (\lambda )}/{C_n^s }}\Bigg]^{\frac{1}{r-s}}. \notag
\end{align}
\end{proposition}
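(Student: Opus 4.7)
The plan is to deduce this generalized form of Newton--MacLaurin from the \emph{classical} (consecutive-index) form, which for $\lambda \in \Gamma_m$ states that the renormalized elementary symmetric functions $p_k := \sigma_k(\lambda)/C_n^k$ satisfy
\[
p_k^2 \;\ge\; p_{k-1}\, p_{k+1}, \qquad 1 \le k \le m-1,
\]
and which is standard, see e.g.\ \cite{L96}. Since $\lambda \in \Gamma_m$ forces $p_k > 0$ for $0 \le k \le m$, the sequence $f(k) := \log p_k$ is well defined on $\{0, 1, \ldots, m\}$, and the classical Newton inequality above says precisely that $f$ is a concave function of $k$ on this set.

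The inequality to be proved, after taking logarithms, is the chord-slope comparison
\[
\frac{f(m) - f(l)}{m - l} \;\le\; \frac{f(r) - f(s)}{r - s}.
\]
First I would observe that the hypotheses $0 \le s < r$, $0 \le l < m$, $s \le l$ and $r \le m$ place both $l$ and $r$ inside the interval $[s, m]$, with $s < m$. Applying the concavity estimate for $f$ at the interior point $r \in [s, m]$ gives
\[
f(r) \;\ge\; \frac{m-r}{m-s}\, f(s) + \frac{r-s}{m-s}\, f(m),
\]
which after subtraction of $f(s)$ and division by $r - s$ rearranges to $\frac{f(r) - f(s)}{r - s} \ge \frac{f(m) - f(s)}{m - s}$. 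Applying the analogous concavity estimate at $l \in [s, m]$ produces $\frac{f(m) - f(l)}{m - l} \le \frac{f(m) - f(s)}{m - s}$. Chaining these two inequalities through the common middle slope $\frac{f(m) - f(s)}{m - s}$ gives the desired result.

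The only non-elementary input, and therefore the only real obstacle, is the classical Newton inequality $p_k^2 \ge p_{k-1}p_{k+1}$ in the cone $\Gamma_m$. This is not proved in the paper but is well known: it follows from Maclaurin's repeated-differentiation argument applied to the polynomial $\prod_i(t - \lambda_i)$ combined with G{\aa}rding's theory of hyperbolic polynomials to secure the cone condition. Once log-concavity of $(p_k)_{0 \le k \le m}$ is in hand, everything else is elementary manipulation of a concave sequence, and no dimension-dependent constants appear beyond the binomial normalization already built into the $p_k$'s.
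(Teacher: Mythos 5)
Your argument is correct, and it is worth noting that the paper does not actually prove this proposition at all --- its ``proof'' is the single line ``See \cite{S05}'' --- so you are supplying the argument that the cited reference would give, and it is the standard one. Newton's inequality $p_k^2 \ge p_{k-1}p_{k+1}$ for $p_k = \sigma_k(\lambda)/C_n^k$ makes $k \mapsto \log p_k$ a concave sequence on $\{0,1,\dots,m\}$, the positivity needed to take logarithms being exactly the definition of $\Gamma_m$ in \eqref{2.1}; the stated inequality is then the chord-slope comparison for a concave sequence, and your two interpolation estimates (at $r$ and at $l$, both lying in $[s,m]$ by the hypotheses $s\le l<m$, $s<r\le m$), chained through the common slope over $[s,m]$, establish it correctly, with the degenerate cases $l=s$ or $r=m$ simply giving equality in one link. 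One small simplification: you do not need G{\aa}rding's theory of hyperbolic polynomials anywhere. The inequality $p_k^2\ge p_{k-1}p_{k+1}$ holds for arbitrary real $\lambda\in\mathbb{R}^n$ by the classical Maclaurin repeated-differentiation argument (Rolle's theorem preserves real-rootedness of $\prod_i(t-\lambda_i)$, reducing to the two-variable case), and the positivity $p_k>0$ for $0\le k\le m$ is immediate from the definition of the cone; $\Gamma_m$ enters only through that positivity.
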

\begin{proof}
See \cite{S05}.
\end{proof}

\subsection{Key Lemmas}

In the establishment of a priori estimates, the following inequalities and properties play an important role.

For the convenience of notations, we will denote
\begin{equation}\label{2.6}
G_k(\partial \bar{\partial} u):= \frac{\sigma_k(\partial \bar{\partial} u)}{\sigma_{k-1}(\partial \bar{\partial} u)},\ \ G_l(\partial \bar{\partial} u) := -\frac{\sigma_l(\partial \bar{\partial} u)}{\sigma_{k-1}(\partial \bar{\partial} u)},~ 0\leq l\leq k-2,
\end{equation}
\begin{equation}\label{2.7}
G(\partial \bar{\partial} u, z):= G_k(\partial \bar{\partial} u) + \sum_{l=0}^{k-2} \alpha_l(z) G_l(\partial \bar{\partial} u),
\end{equation}
and
\begin{equation}\label{2.8}
G^{i\bar{j}} :=\frac{\partial G}{\partial u_{i\bar{j}}}, ~~ 1 \leq i, j \leq n.
\end{equation}

\begin{lemma}\label{lem2.5}
If $u$ is a $C^2$ function with $\lambda(\partial \bar{\partial} u)\in \Gamma_{k}$, and $\alpha_l(z)$ ($0 \leq l \leq k-2$) are positive, then the operator $G$ is elliptic and concave.
\end{lemma}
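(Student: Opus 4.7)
The plan is to reduce both properties to statements about $G$ as a symmetric function of the eigenvalues $\lambda = \lambda(\partial\bar\partial u) \in \Gamma_k$. Since $G$ depends on the Hermitian matrix $(u_{i\bar j})$ only through its unitary invariants, I would first recall the standard fact that a symmetric concave function of eigenvalues of a Hermitian matrix extends to a concave function on the space of such matrices, and that at a point where $(u_{i\bar j})$ is diagonalized the matrix $(G^{i\bar j})$ is itself diagonal with entries $\partial G/\partial \lambda_i$. Thus ellipticity and concavity both reduce to showing, for each $\lambda \in \Gamma_k$, (a) $\partial G/\partial \lambda_i > 0$ for every $i$, and (b) $G$ is concave in $\lambda$. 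Since the positive coefficients $\alpha_l(z)$ give
\[
G(\lambda, z) \;=\; \frac{\sigma_k(\lambda)}{\sigma_{k-1}(\lambda)} \;+\; \sum_{l=0}^{k-2}\alpha_l(z)\,\Bigl(-\frac{\sigma_l(\lambda)}{\sigma_{k-1}(\lambda)}\Bigr),
\]
it suffices to verify (a) and (b) for each building block $\sigma_k/\sigma_{k-1}$ and $-\sigma_l/\sigma_{k-1}$ for $0 \leq l \leq k-2$.

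For the leading block $\sigma_k/\sigma_{k-1}$, concavity on $\Gamma_k$ is the trivial-exponent case of Proposition \ref{prop2.4} (take $m = r = k$, $l = s = k-1$, so $1/(k-l) = 1$); positivity of the partials on $\Gamma_k$ is the classical Newton--MacLaurin inequality $\sigma_{k-1}(\lambda|i)\sigma_{k-1}(\lambda) \geq \sigma_k(\lambda)\sigma_{k-2}(\lambda|i)$ applied to $\lambda|i$. For each lower block $-\sigma_l/\sigma_{k-1}$ with $0 \leq l \leq k-2$, the plan is a reciprocal-and-power argument. By Proposition \ref{prop2.4} the function $(\sigma_{k-1}/\sigma_l)^{1/(k-1-l)}$ is positive and concave on $\Gamma_{k-1} \supset \Gamma_k$; its reciprocal $(\sigma_l/\sigma_{k-1})^{1/(k-1-l)}$ is therefore positive and convex (reciprocal of a positive concave function is convex); and raising this to the exponent $k-1-l \geq 1$ preserves convexity because $t \mapsto t^{k-1-l}$ is convex nondecreasing on $[0,\infty)$. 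Hence $\sigma_l/\sigma_{k-1}$ is convex on $\Gamma_k$, so $-\sigma_l/\sigma_{k-1}$ is concave. A parallel argument handles the partial derivatives: $(\sigma_{k-1}/\sigma_l)^{1/(k-1-l)}$ has positive partials on $\Gamma_{k-1}$ (an application of Newton--MacLaurin to $\lambda|i$), hence so does $\sigma_{k-1}/\sigma_l$, so $\sigma_l/\sigma_{k-1}$ has negative partials, giving $\partial(-\sigma_l/\sigma_{k-1})/\partial\lambda_i > 0$.

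The main obstacle is precisely the concavity of $-\sigma_l/\sigma_{k-1}$ for $l \leq k-2$: neither $\sigma_l$ nor $\sigma_{k-1}$ is individually concave or convex on $\Gamma_k$, and Proposition \ref{prop2.4} by itself only gives concavity of the $(k-1-l)$-th root of the inverse ratio. The reciprocal-and-then-power trick outlined above is what converts that indirect information into the convexity of $\sigma_l/\sigma_{k-1}$ itself. Once this step is in place, summing with the nonnegative weights $\alpha_l(z)$ proves concavity of $G$, and the accompanying sign computations for the partials yield ellipticity, finishing the lemma.
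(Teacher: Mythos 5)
Your overall strategy is the right one, and it is essentially the argument the paper has in mind (its own ``proof'' is a one-line deferral to the real case in \cite{GZ2019}): reduce to the eigenvalue function, treat $\sigma_k/\sigma_{k-1}$ and each $-\sigma_l/\sigma_{k-1}$ separately, and obtain concavity of the lower-order blocks by the reciprocal-and-power trick. That trick is correct as stated (the reciprocal of a positive concave function is convex, and composing with the convex nondecreasing map $t\mapsto t^{k-1-l}$ preserves convexity), and it is precisely the mechanism behind the identity for $G_l^{i\bar j,r\bar s}$ that the authors later use in the proof of Theorem \ref{th3.3}. The sign computations for the partial derivatives are also fine.

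The genuine flaw is in where you source the concavity inputs. Proposition \ref{prop2.4} is the Newton--MacLaurin \emph{inequality between values} of the quotients; it contains no information about concavity. In particular, taking $m=r=k$, $l=s=k-1$ there yields only the tautology $\sigma_k/\sigma_{k-1}\le\sigma_k/\sigma_{k-1}$, so your claim that ``concavity of $\sigma_k/\sigma_{k-1}$ on $\Gamma_k$ is the trivial-exponent case of Proposition \ref{prop2.4}'' proves nothing; the same misattribution occurs when you assert that $(\sigma_{k-1}/\sigma_l)^{1/(k-1-l)}$ is concave on $\Gamma_{k-1}$ ``by Proposition \ref{prop2.4}.'' The facts you need are true but are a separate classical theorem: the concavity of $(\sigma_m/\sigma_l)^{1/(m-l)}$ on $\Gamma_m$ (Marcus--Lopes for $\sigma_m/\sigma_{m-1}$, Caffarelli--Nirenberg--Spruck \cite{CNS85} in general, ultimately resting on G{\aa}rding's theory of hyperbolic polynomials), together with its extension from symmetric functions of eigenvalues to the space of Hermitian matrices. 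Once you cite that theorem instead of Proposition \ref{prop2.4}, your argument closes the lemma; as written, the crucial concavity step is unsupported.
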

\begin{proof}
The proof is similar with the real case in \cite{GZ2019}.
\end{proof}

\begin{lemma}\label{lem2.6}
If $u$ is a $k$-admissible solution of \eqref{1.1}, and $\alpha_l(z)$ ($0 \leq l \leq k-1$) are positive, then
\begin{align}
\label{2.9}& 0 <\frac{\sigma_l(\partial \bar{\partial} u)}{\sigma_{k-1}(\partial \bar{\partial} u)} \leq C(n,k, \inf_\Omega \alpha_l), ~~ 0 \leq l \leq k-2; \\
\label{2.10}& 0< \inf_\Omega \alpha_{k-1} \leq \frac{\sigma_k(\partial \bar{\partial} u)}{\sigma_{k-1}(\partial \bar{\partial} u)} \leq C(n,k, \sum\limits_{l=0}^{k-1}\sup_\Omega \alpha_l).
\end{align}
\end{lemma}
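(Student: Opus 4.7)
The plan is to divide the equation \eqref{1.1} by $\sigma_{k-1}(\partial \bar{\partial} u)$, which is strictly positive since $\lambda(\partial \bar{\partial} u) \in \Gamma_k \subset \Gamma_{k-1}$, and then exploit positivity together with the generalized Newton--MacLaurin inequality of Proposition \ref{prop2.4}. Writing $R := \sigma_k(\partial \bar{\partial} u)/\sigma_{k-1}(\partial \bar{\partial} u)$ and $T_l := \sigma_l(\partial \bar{\partial} u)/\sigma_{k-1}(\partial \bar{\partial} u)$, equation \eqref{1.1} becomes
\begin{equation*}
R = \alpha_{k-1}(z) + \sum_{l=0}^{k-2} \alpha_l(z)\, T_l.
\end{equation*}
Proposition \ref{prop2.3} gives $\sigma_l > 0$ for $0 \leq l \leq k-1$, so each $T_l > 0$. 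The lower bound in \eqref{2.10} and the strict positivity in \eqref{2.9} then read off directly: $R \geq \alpha_{k-1} \geq \inf_\Omega \alpha_{k-1} > 0$.

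For the upper bound on $T_l$ in \eqref{2.9}, I fix $l \in \{0, \ldots, k-2\}$ and apply Proposition \ref{prop2.4} with the specific choice of indices $(m, r, s) = (k, k-1, l)$ and the lower index on the left also equal to $l$. The four hypotheses $k > l$, $k-1 > l$, $k \geq k-1$, $l \geq l$ are all satisfied. Raising the resulting inequality to the power $(k-l)(k-1-l)$ and clearing binomial factors gives
\begin{equation*}
\sigma_l \cdot \sigma_k^{\,k-1-l} \leq C(n,k,l)\, \sigma_{k-1}^{\,k-l},
\end{equation*}
which after dividing by $\sigma_{k-1}^{\,k-l}$ reads $T_l \leq C(n,k,l)\, R^{-(k-1-l)}$. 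Combining with the lower bound $R \geq \inf_\Omega \alpha_{k-1}$ established above yields the upper bound in \eqref{2.9}.

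For the remaining upper bound in \eqref{2.10}, I substitute the Newton--MacLaurin bound $T_l \leq C(n,k,l)\, R^{-(k-1-l)}$ back into the identity for $R$ and multiply through by $R^{k-1}$ to obtain
\begin{equation*}
R^k \leq \sup_\Omega \alpha_{k-1} \cdot R^{k-1} + \sum_{l=0}^{k-2} C(n,k,l)\, \sup_\Omega \alpha_l \cdot R^l.
\end{equation*}
If $R \leq 1$ the conclusion is trivial; if $R \geq 1$ then $R^l \leq R^{k-1}$ for every $l \leq k-1$, so dividing by $R^{k-1}$ gives $R \leq C(n,k)\, \sum_{l=0}^{k-1} \sup_\Omega \alpha_l$. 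In either case the bound depends only on $n$, $k$, and $\sum_{l=0}^{k-1} \sup_\Omega \alpha_l$, as claimed.

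The only real obstacle is the index juggling in Proposition \ref{prop2.4}: the most natural attempt, putting $\sigma_{k-1}/\sigma_l$ on the left and $\sigma_k/\sigma_{k-1}$ on the right, violates the required hypothesis $m \geq r$. One is instead forced to place the wider spread $\sigma_k/\sigma_l$ on the left and read out an upper bound on $\sigma_l$, which is precisely what lets the subsequent polynomial-in-$R$ argument close up cleanly.
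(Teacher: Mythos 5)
Your proof is correct and rests on the same key tool as the paper's: the Newton--MacLaurin inequality of Proposition \ref{prop2.4} in exactly the form $\sigma_l/\sigma_{k-1}\le C(n,k,l)\,(\sigma_{k-1}/\sigma_k)^{k-1-l}$. The only (cosmetic) difference is in the bookkeeping: the paper splits into the cases $\sigma_k/\sigma_{k-1}\le 1$ and $>1$, reading \eqref{2.9} off the equation directly in the first case, whereas you combine the Newton--MacLaurin bound with the lower bound $\sigma_k/\sigma_{k-1}\ge\inf_\Omega\alpha_{k-1}$ and a short polynomial-in-$R$ argument; both yield the stated constants.
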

\begin{proof}
The left hand sides of \eqref{2.9} and \eqref{2.10} are easy to prove. In the following, we prove the right hand sides.

Firstly, if $\frac{\sigma_k}{\sigma_{k-1}}\leq 1$, then we get from the equation \eqref{1.1}
\begin{equation*}
\alpha_l \frac{\sigma_l}{\sigma_{k-1}} \leq \frac{\sigma_k}{\sigma_{k-1}}  \leq 1, ~~0 \leq l \leq k-2.
\end{equation*}

Secondly, if $\frac{\sigma_k}{\sigma_{k-1}} > 1$, i.e. $\frac{\sigma_{k-1}}{\sigma_{k}} < 1$.
We can get for $0 \leq l \leq k-2$ by the Newton-MacLaurin inequality,
\begin{equation*}
\frac{\sigma_l}{\sigma_{k-1}}\leq \frac{(C_n^k)^{k-1-l}C_n^l}{(C_n^{k-1})^{k-l}}(\frac{\sigma_{k-1}}{\sigma_k})^{k-1-l} \leq \frac{(C_n^k)^{k-1-l}C_n^l}{(C_n^{k-1})^{k-l}} \leq C(n,k),
\end{equation*}
and
\begin{equation*}
\frac{\sigma_k}{\sigma_{k-1}} = \sum\limits_{l=0}^{k-1} \alpha_l \frac{\sigma_l}{\sigma_{k-1}} \leq  C(n,k) \sum\limits_{l=0}^{k-1} \sup_\Omega \alpha_l.
\end{equation*}
\end{proof}

\begin{lemma}\label{lem2.7}
If $u$ is a $k$-admissible solution of \eqref{1.1}, and $\alpha_l(z)$ ($0 \leq l \leq k-1$) are positive, then
\begin{align}
\label{2.11}& \frac{n-k+1}{k} \leq \sum G^{i\bar{i}} < n-k-1; \\
\label{2.12}&  \inf_\Omega \alpha_{k-1} \leq  \sum G^{i\bar{j}} u_{i\bar{j}} \leq C(n, k, \sum\limits_{l=0}^{k-1} \sup_\Omega \alpha_l).
\end{align}
\end{lemma}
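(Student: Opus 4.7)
The natural approach is to work in an eigenbasis: at any point $z$, unitarily diagonalize $\partial\bar\partial u$ so that $u_{i\bar j}=\lambda_i\delta_{ij}$ with $\lambda=(\lambda_1,\ldots,\lambda_n)\in\Gamma_k$. In this frame $G^{i\bar j}$ is diagonal and $G^{i\bar i}=\partial G/\partial\lambda_i$, so the quantities to estimate are just
\[
\sum_i G^{i\bar i}=\sum_i\partial_{\lambda_i}G_k+\sum_{l=0}^{k-2}\alpha_l\sum_i\partial_{\lambda_i}G_l,
\qquad
\sum G^{i\bar j}u_{i\bar j}=\sum_i\lambda_i G^{i\bar i}.
\]
Everything then reduces to symmetric-function identities, which I would invoke from Proposition \ref{prop2.1} (namely $\sum_i\sigma_m(\lambda|i)=(n-m)\sigma_m$ and $\sum_i\lambda_i\sigma_{m-1}(\lambda|i)=m\sigma_m$).

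For \eqref{2.12} the simplification is very clean. Using the identity for $\sum_i\lambda_i\sigma_{m-1}(\lambda|i)$, a direct computation gives
\[
\sum_i\lambda_iG_k^{i\bar i}=\frac{\sigma_k}{\sigma_{k-1}},
\qquad
\sum_i\lambda_iG_l^{i\bar i}=(k-1-l)\frac{\sigma_l}{\sigma_{k-1}}\quad(0\le l\le k-2).
\]
Substituting into $\sum G^{i\bar j}u_{i\bar j}$ and then using the equation \eqref{1.1} in the form $\sigma_k/\sigma_{k-1}=\alpha_{k-1}+\sum_{l=0}^{k-2}\alpha_l\sigma_l/\sigma_{k-1}$ yields
\[
\sum G^{i\bar j}u_{i\bar j}=\alpha_{k-1}+\sum_{l=0}^{k-2}(k-l)\,\alpha_l\,\frac{\sigma_l}{\sigma_{k-1}}.
\]
The lower bound $\geq\inf_\Omega\alpha_{k-1}$ is immediate since every term on the right is non-negative (here $\alpha_l>0$ and $\sigma_l/\sigma_{k-1}>0$ on $\Gamma_k$); the upper bound follows by invoking Lemma \ref{lem2.6} to control each $\sigma_l/\sigma_{k-1}$.

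For \eqref{2.11} the derivation of the explicit sum is analogous but the bound is trickier. The identity $\sum_i\sigma_m(\lambda|i)=(n-m)\sigma_m$ gives
\[
\sum_i\partial_{\lambda_i}G_k=(n-k+1)-(n-k+2)\frac{\sigma_k\sigma_{k-2}}{\sigma_{k-1}^2},
\]
and a parallel expression for each $\sum_i\partial_{\lambda_i}G_l$ in terms of $\sigma_{l-1}/\sigma_{k-1}$ and $\sigma_l\sigma_{k-2}/\sigma_{k-1}^2$. Ellipticity (Lemma \ref{lem2.5}) gives $\sum G^{i\bar i}>0$, while the lower bound comes from the Newton--MacLaurin inequality (Proposition \ref{prop2.4}) applied to the ratio $\sigma_k\sigma_{k-2}/\sigma_{k-1}^2$, which yields $\sigma_k\sigma_{k-2}/\sigma_{k-1}^2\le\frac{(k-1)(n-k+1)}{k(n-k+2)}$ and hence the contribution of $G_k$ alone is at least $(n-k+1)/k$; the non-negativity of the remaining contributions (after combining the two pieces of $\alpha_l\sum_i\partial_{\lambda_i}G_l$) then finishes the lower bound.

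The main obstacle is the upper bound in \eqref{2.11}. Unlike \eqref{2.12}, the mixed terms $\alpha_l\sum_i\partial_{\lambda_i}G_l$ do not collapse to a single non-negative combination, so one must carefully separate the pieces $-(n-l+1)\sigma_{l-1}/\sigma_{k-1}$ and $(n-k+2)\sigma_l\sigma_{k-2}/\sigma_{k-1}^2$. The former is negative and hence favorable, while the latter is controlled by bounding $\sigma_{k-2}/\sigma_{k-1}$ via Newton--MacLaurin together with the bounds in Lemma \ref{lem2.6}; the upper bound for $\sum_i\partial_{\lambda_i}G_k$ itself is just $n-k+1$ since the subtracted term is non-negative. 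Combining these gives an upper bound of the stated form, and this bookkeeping of signs is where one has to be most careful.
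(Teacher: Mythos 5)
Your treatment of \eqref{2.12} and of the lower bound in \eqref{2.11} is correct and is essentially the paper's own argument: the Euler-type identities give $\sum G^{i\bar j}u_{i\bar j}=\alpha_{k-1}+\sum_{l=0}^{k-2}(k-l)\alpha_l\sigma_l/\sigma_{k-1}$ after substituting the equation, and for the lower bound of \eqref{2.11} you discard the nonnegative contributions $\alpha_l\sum_i\partial_{\lambda_i}G_l$ and apply Newton's inequality $\sigma_k\sigma_{k-2}/\sigma_{k-1}^2\le\frac{(k-1)(n-k+1)}{k(n-k+2)}$, exactly as in the paper. (Incidentally, the upper bound $n-k-1$ in the statement is a typo for $n-k+1$; for $k$ close to $n$ it would contradict the positive lower bound.)

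The genuine gap is in the upper bound of \eqref{2.11}. You discard the term $-(n-k+2)\sigma_k\sigma_{k-2}/\sigma_{k-1}^2$ coming from $\sum_i\partial_{\lambda_i}G_k$ (``the upper bound for $\sum_i\partial_{\lambda_i}G_k$ itself is just $n-k+1$'') and then bound the positive pieces $\alpha_l(n-k+2)\sigma_l\sigma_{k-2}/\sigma_{k-1}^2$ by a constant via Lemma \ref{lem2.6}. That only yields $\sum G^{i\bar i}\le (n-k+1)+C$, not the asserted strict bound $\sum G^{i\bar i}<n-k+1$. The paper's proof keeps the discarded term and groups it with exactly those positive pieces: after dropping only the manifestly negative terms $-\alpha_l(n-l+1)\sigma_{l-1}/\sigma_{k-1}$, the remaining $\sigma_{k-2}$-terms combine into
\[
-\frac{(n-k+2)\sigma_{k-2}}{\sigma_{k-1}}\Big(\frac{\sigma_k}{\sigma_{k-1}}-\sum_{l=0}^{k-2}\alpha_l\frac{\sigma_l}{\sigma_{k-1}}\Big)
=-\frac{(n-k+2)\sigma_{k-2}}{\sigma_{k-1}}\,\alpha_{k-1}<0
\]
by the equation \eqref{1.1}, and this is precisely what produces $\sum G^{i\bar i}<n-k+1$ with no additional constant. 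Your weaker bound would still be adequate for the way the lemma is used in Section 3 (only a controlled constant enters there, at the price of a larger $K$), but as written your argument does not prove the inequality claimed in the lemma.
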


\begin{proof}
By direct computations, we can get
\begin{align}
\sum {G^{i\bar{i}} }  \ge \sum {\frac{{\partial \left( {\frac{{\sigma _k }}{{\sigma _{k - 1} }}} \right)}}{{\partial \lambda _i }}}  =& \sum {\frac{{\sigma _{k - 1} (\lambda |i)\sigma _{k - 1}  - \sigma _k \sigma _{k - 2} (\lambda |i)}}{{\sigma _{k - 1} ^2 }}}  \notag \\
=& \frac{{(n - k + 1)\sigma _{k - 1} ^2  - (n - k + 2)\sigma _k \sigma _{k - 2} }}{{\sigma _{k - 1} ^2 }} \notag \\
\ge& \frac{{n - k + 1}}{k},
\end{align}
and
\begin{align}
\sum {G^{i\bar{i}} }  =& \sum {\frac{{\partial \left( {\frac{{\sigma _k }}{{\sigma _{k - 1} }}} \right)}}{{\partial \lambda _i }}}  - \sum\limits_{l = 0}^{k - 2} {\alpha _l \sum\limits_i {\frac{{\partial \left( {\frac{{\sigma _l }}{{\sigma _{k - 1} }}} \right)}}{{\partial \lambda _i }}} }  \notag \\
=& \sum {\frac{{\sigma _{k - 1} (\lambda |i)\sigma _{k - 1}  - \sigma _k \sigma _{k - 2} (\lambda |i)}}{{\sigma _{k - 1} ^2 }}}  - \sum\limits_{l = 0}^{k - 2} {\alpha _l \sum\limits_i {\frac{{\sigma _{l - 1} (\lambda |i)\sigma _{k - 1}  - \sigma _l \sigma _{k - 2} (\lambda |i)}}{{\sigma _{k - 1} ^2 }}} } \notag \\
=& \frac{{(n - k + 1)\sigma _{k - 1} ^2  - (n - k + 2)\sigma _k \sigma _{k - 2} }}{{\sigma _{k - 1} ^2 }} \notag \\
 &+ \sum\limits_{l = 0}^{k - 2} {\alpha _l \frac{{(n - k + 2)\sigma _l \sigma _{k - 2}  - (n - l + 1)\sigma _{l - 1} \sigma _{k - 1} }}{{\sigma _{k - 1} ^2 }}}  \notag \\
\le& (n - k + 1) - \frac{{(n - k + 2)\sigma _{k - 2} }}{{\sigma _{k - 1} }}\left( {\frac{{\sigma _k }}{{\sigma _{k - 1} }} - \sum\limits_{l = 0}^{k - 2} {\alpha _l \frac{{\sigma _l }}{{\sigma _{k - 1} }}} } \right)  \notag \\
<& n - k + 1,
\end{align}
hence \eqref{2.11} holds. Also, we can get
\begin{align}
\sum {G^{i\bar{j}} u_{i\bar{j}} }  =& \sum {\frac{{\partial \left( {\frac{{\sigma _k }}{{\sigma _{k - 1} }}} \right)}}{{\partial \lambda _i }}\lambda _i }  - \sum\limits_{l = 0}^{k - 2} {\alpha _l \sum\limits_i {\frac{{\partial \left( {\frac{{\sigma _l }}{{\sigma _{k - 1} }}} \right)}}{{\partial \lambda _i }}\lambda _i } }  \notag \\
=& \frac{{\sigma _k }}{{\sigma _{k - 1} }} + \sum\limits_{l = 0}^{k - 2} {(k - 1 - l)\alpha _l \frac{{\sigma _l }}{{\sigma _{k - 1} }}}  \notag \\
=& \alpha _{k-1}+ \sum\limits_{l = 0}^{k - 2} {(k  - l)\alpha _l \frac{{\sigma _l }}{{\sigma _{k - 1} }}},
\end{align}
hence \eqref{2.12} holds.

\end{proof}

\section{Proof of Theorem \ref{th1.1}}

In this section, we prove Theorem \ref{th1.1}.

\subsection{$C^0$ estimate}

The $C^0$ estimate is easy. For completeness, we produce a proof here following the idea of Lions-Trudinger-Urbas \cite{LTU86}.

\begin{theorem} \label{th3.1}
Suppose $\Omega \subset \mathbb{C}^n$ is a $C^1$ bounded domain, $\alpha_l(z) \in C^0(\overline{\Omega})$ with $l=0, 1, \cdots, k-1$ are positive functions and $\varphi \in C^0(\partial \Omega)$, and $u \in C^2(\Omega)\cap C^1(\overline \Omega)$ is the $k$-admissible solution of the equation \eqref{1.2} with $\varepsilon \in (0,1)$, then we have
\begin{align}\label{3.1}
\sup_\Omega |\varepsilon u|  \leq M_0,
\end{align}
where $M_0$ depends on $n$, $k$, $ \text{diam} (\Omega)$, $\max\limits_{\partial \Omega} |\varphi|$ and $\sum\limits_{l=0}^{k-1} \sup\limits_\Omega \alpha_l$.
\end{theorem}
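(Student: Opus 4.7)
The plan is to bound $\sup_\Omega \varepsilon u$ and $-\inf_\Omega \varepsilon u$ separately, in the spirit of Lions--Trudinger--Urbas \cite{LTU86}. The upper bound is essentially free: because $u$ is $k$-admissible, $\lambda(\partial\bar{\partial} u) \in \Gamma_k \subset \Gamma_1$, hence $\Delta u = 4\,\sigma_1(\partial\bar{\partial} u) > 0$, and the classical maximum principle places $\max_{\overline\Omega} u$ at some boundary point $z_0$; the outer normal derivative there is nonnegative, so the Neumann condition gives $\varepsilon u(z_0) \le \varphi(z_0) \le \max_{\partial\Omega}|\varphi|$.

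The heart of the proof is the lower bound, for which I would compare $u$ against a radial barrier $h(z) := \frac{A}{4}|z - z^\ast|^2$ centered at a fixed interior point $z^\ast \in \Omega$, with $A > 0$ a large constant to be chosen. Since $\partial\bar{\partial} h = (A/4) I$, a direct computation gives
\[
G[h] \;=\; \frac{n-k+1}{k} \cdot \frac{A}{4} \;-\; \sum_{l=0}^{k-2} \alpha_l(z)\, \frac{C_n^l}{C_n^{k-1}}\Bigl(\frac{A}{4}\Bigr)^{l-k+1},
\]
whose leading term grows linearly in $A$ while every subtracted term tends to zero as $A\to\infty$. Thus $A$ can be chosen, depending only on $n$, $k$ and $\sum_l \sup_\Omega \alpha_l$, so that $G[h](z) > \alpha_{k-1}(z) = G[u](z)$ uniformly on $\overline\Omega$.

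Concavity of $G$ (Lemma~\ref{lem2.5}) then yields $G^{i\bar j}(\partial\bar{\partial} u)(h - u)_{i\bar j} \ge G[h] - G[u] > 0$, so $h - u$ is a subsolution of the linear elliptic operator $L := G^{i\bar j}(\partial\bar{\partial} u)\,\partial_i\partial_{\bar j}$, which has no zero-order term. The weak maximum principle forces $\max_{\overline\Omega}(h - u)$ onto $\partial\Omega$, so $\min_{\overline\Omega}(u - h)$ is attained at some $y \in \partial\Omega$ with $D_\nu(u - h)(y) \le 0$. Feeding the Neumann condition into this inequality gives $\varepsilon u(y) \ge -\max_{\partial\Omega}|\varphi| - \frac{A}{2}\,\text{diam}(\Omega)$, and combining this with $u(z) - h(z) \ge u(y) - h(y)$, $0 \le h \le (A/4)\text{diam}(\Omega)^2$, and $\varepsilon < 1$ produces the required bound $\varepsilon u \ge -M_0$. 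The main obstacle is this barrier construction: selecting $A$ so that $G[h]$ strictly dominates $\alpha_{k-1}$ throughout $\overline\Omega$, and then invoking the concavity of $G$ to convert that pointwise inequality into a clean linear maximum-principle comparison. Everything else is the Neumann condition and elementary bookkeeping.
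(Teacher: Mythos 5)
Your proposal is correct and follows essentially the same route as the paper: subharmonicity plus the Neumann condition for the upper bound, and a quadratic barrier $A|z-z^\ast|^2$ with $A$ chosen so that $G$ of the barrier dominates $\alpha_{k-1}=G[u]$, combined with the comparison principle and the boundary condition, for the lower bound. The only difference is that you spell out how concavity of $G$ yields the comparison principle, which the paper invokes without comment.
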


\begin{proof}
Firstly, since $u$ is subharmonic, the maximum of $u$ is attained at some boundary point $z_0 \in \partial \Omega$. Then we can get
\begin{align}\label{3.2}
0 \leq D_\nu u (z_0) =-\varepsilon u(z_0) + \varphi (z_0).
\end{align}
Hence
\begin{align}\label{3.3}
\max_{\bar \Omega} ( \varepsilon u) = \varepsilon u(z_0) \leq \varphi (z_0) \leq \max_{\partial\Omega} |\varphi|.
\end{align}

For a fixed point $z_1 \in \Omega$, and a positive constant $A$ large enough, we have
\begin{align} \label{3.4}
G (\partial\bar{\partial} (A|z-z_1|^2), z) =& 2A \frac{C_n^k}{C_n^{k-1}} - \sum\limits_{l=0}^{k-2} \alpha_l (2A)^{-(k-1-l)} \frac{C_n^l}{C_n^{k-1}} \notag \\
\geq& \sup_\Omega \alpha_{k-1} \geq \alpha_{k-1}(z) = G (\partial\bar{\partial} u, z).
\end{align}
By the comparison principle, we know $u - A|z-z_1|^2$ attains its minimum at some boundary point $z_2 \in \partial \Omega$. Then
\begin{align}\label{3.5}
0 \geq& D_\nu (u - A|z-z_1|^2)|_{z=z_2} = D_\nu u(z_2) - A D_\nu(|z-z_1|^2)|_{z=z_2}  \notag \\
\geq& -\varepsilon u(z_2) - \max_{\partial \Omega} |\varphi| - 2A \text{diam}(\Omega).
\end{align}
Hence
\begin{align}\label{3.6}
\min_{\overline{\Omega}} (\varepsilon u) \geq \varepsilon \min_{\overline{\Omega}} (u - A|z-z_1|^2) \geq & \varepsilon u (z_2) - A|z_2-z_1|^2 \notag \\
\geq& - \max_{\partial \Omega} |\varphi| - 2A \text{diam}(\Omega)- A \text{diam}(\Omega)^2.
\end{align}

\end{proof}

\subsection{Global Gradient estimate}

In this subsection, we prove the global gradient estimate (independent of $\varepsilon$), using a similar argument of complex Monge-Amp\`ere equation in Li \cite{L94}.

\begin{theorem} \label{th3.2}
Suppose $\Omega \subset \mathbb{C}^n$ is a $C^3$ strictly convex domain, $\alpha_l(z) \in C^1(\overline{\Omega})$ with $l=0, 1, \cdots, k-1$ are positive functions and $\varphi \in C^2(\partial \Omega)$, and $u \in C^3(\Omega)\cap C^2(\overline \Omega)$ is the $k$-admissible solution of the equation \eqref{1.2} with $\varepsilon >0$ sufficiently small, then we have
\begin{align}\label{3.7}
\sup_\Omega |D u|  \leq M_1,
\end{align}
where $M_1$ depends on $n$, $k$, $\Omega$, $|\varphi|_{C^2}$, $\inf _\Omega\alpha_l$ and $|\alpha_l|_{C^1}$.
\end{theorem}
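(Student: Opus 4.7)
The plan is to adapt the argument of S.Y. Li \cite{L94} for the complex Monge-Amp\`ere Neumann problem, using that by Lemma \ref{lem2.5} the mixed operator $G$ of \eqref{2.7} is elliptic and concave on $\Gamma_k$, so that \eqref{1.1} takes the reduced form $G(\partial\bar\partial u, z) = \alpha_{k-1}(z)$. I would introduce an auxiliary function
$$\Phi(z) = \log |\partial u|^2 + h(u) \qquad \text{on } \overline{\Omega},$$
where $h:\mathbb{R}\to\mathbb{R}$ is smooth and strictly convex, chosen so that $h''$ dominates $|h'|^2$ after using the $C^0$ bound of Theorem \ref{th3.1} (for instance $h(u) = e^{-Au}$ with $A$ large). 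Proving that $\Phi \leq M_1$ at its maximum $z_0 \in \overline{\Omega}$ then implies \eqref{3.7}.

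If $z_0 \in \Omega$, then $D\Phi(z_0) = 0$ and $G^{i\bar j}\Phi_{i\bar j}(z_0) \leq 0$. Differentiating $G(\partial\bar\partial u, z) = \alpha_{k-1}(z)$ in $z_k$ and using the commutation $u_{k i\bar j} = u_{i\bar j k}$, I can write $G^{i\bar j}u_{k i\bar j}$ in terms of $\partial_k\alpha_l$ and lower-order pieces whose size is controlled by $|\partial u|$ and $|\alpha_l|_{C^1}$. Expanding $G^{i\bar j}\Phi_{i\bar j}$, the third-order cross terms are absorbed by Cauchy-Schwarz against the $h''$-contribution (using the critical-point relation on $\Phi_i$), producing an inequality of the schematic form
$$0 \geq c_1\, h''(u)\, |\partial u|^2 \sum_i G^{i\bar i} - C_1(1+|\partial u|)\sum_i G^{i\bar i} - C_2|\partial u|^2.$$
The lower bound $\sum_i G^{i\bar i} \geq (n-k+1)/k$ from \eqref{2.11} then forces $|\partial u|(z_0) \leq C$.

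If instead $z_0 \in \partial\Omega$, then $D_\nu \Phi(z_0) \geq 0$, and using the Neumann condition $D_\nu u = -\varepsilon u + \varphi$,
$$\frac{D_\nu |\partial u|^2}{|\partial u|^2}(z_0) + h'(u(z_0))\bigl(-\varepsilon u(z_0) + \varphi(z_0)\bigr) \geq 0.$$
The second term is bounded via the $C^0$ estimate. For the first term I decompose $Du$ into its normal and tangential parts at $z_0$, differentiate the Neumann condition tangentially to rewrite the mixed boundary derivatives, and expand $D_\nu|\partial u|^2$ via the real/complex conversion formulas of Remark \ref{rmk1.2}; the strict convexity of $\partial\Omega$ yields a negative term $-\kappa|D^\tau u|^2$ with $\kappa > 0$ that dominates the remaining contributions, bounding $|D^\tau u|(z_0)$. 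Since $|D_\nu u(z_0)|$ is already controlled by Theorem \ref{th3.1}, one concludes $|Du|(z_0) \leq C$.

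The main obstacle will be this boundary step: in the complex setting the $\partial\bar\partial$-decomposition along $\partial\Omega$ mixes the real and imaginary tangential directions, so cleanly isolating the negative second-fundamental-form term from $D_\nu|\partial u|^2$ is delicate. A secondary difficulty is tracking the $\varepsilon$-dependence: the function $h$ must be chosen so that the $h'(u)(-\varepsilon u)$ contribution on the boundary, together with the interior balance between $h''$ and the third-order cross terms, produces a bound independent of $\varepsilon$ for all sufficiently small $\varepsilon > 0$.
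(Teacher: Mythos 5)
Your route is genuinely different from the paper's: the authors do not use a $\log|\partial u|^2+h(u)$ test function at all, but instead maximize the Lions--Trudinger--Urbas/Li functional $W(z,\xi)=D_\xi u-\langle\nu,\xi\rangle(-\varepsilon u+\varphi)+\varepsilon^2u^2+K|z|^2$ over $\overline\Omega\times\mathbb S^{2n-1}$. That choice is what makes their proof close, and your sketch has two concrete gaps exactly where their choice pays off.

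First, the interior step. After expanding $G^{i\bar j}\Phi_{i\bar j}$ at the maximum, the positive term produced by $h$ is $h''(u)\,G^{i\bar j}u_iu_{\bar j}=h''(u)\sum_i G^{i\bar i}|u_i|^2$ (in diagonalizing coordinates), \emph{not} $c_1h''(u)|\partial u|^2\sum_iG^{i\bar i}$ as in your schematic inequality. The operator $G$ is elliptic but not uniformly so: no individual $G^{i\bar i}$ has a positive lower bound (for $\sigma_k/\sigma_{k-1}$ the derivative in the largest-eigenvalue direction can be arbitrarily small), so $\sum_iG^{i\bar i}|u_i|^2$ may degenerate precisely in the direction where $\partial u$ concentrates, and \eqref{2.11} does not rescue you. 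Repairing this requires the full Chou--Wang/Trudinger-style case analysis on the eigenvalues, i.e.\ a genuine interior gradient estimate for Hessian-type operators. The paper avoids the issue entirely because $W$ is \emph{linear} in $Du$: $G^{i\bar i}\partial_{i\bar i}W$ contains third derivatives only linearly (killed by differentiating the equation once) and the remaining terms are controlled by the two-sided bounds \eqref{2.11} and \eqref{2.12} alone. Second, the boundary step. At a boundary maximum, $D_\nu|\partial u|^2$ contains the term $2D_{\nu\nu}u\cdot D_\nu u$ after you split $Du$ into tangential and normal parts; $D_\nu u=-\varepsilon u+\varphi$ is bounded, but $D_{\nu\nu}u$ is a second derivative that is completely uncontrolled at this stage (bounding it is the content of Theorems \ref{th3.3} and \ref{th4.1}), and it has no sign. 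The convexity term $-\kappa|D^\tau u|^2$ cannot dominate it. The paper's case (c) sidesteps this because for a \emph{tangential} direction $\xi_0$ one has $D_\nu D_{\xi_0}u=D_{\xi_0}D_\nu u-D_{\xi_0}\nu^kD_ku+\cdots$, which involves only a tangential derivative of the boundary data and first derivatives of $u$ — no double normal second derivative ever appears. Unless you can either cancel the $D_{\nu\nu}u\,D_\nu u$ term (e.g.\ by building the boundary condition into the test function, as Ma--Qiu do in the real case) or switch to the directional functional $W$, both halves of your argument are open.
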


\begin{proof}

In order to prove \eqref{3.7}, it suffices to prove
\begin{align}\label{3.9}
D_{\xi}u(z) \leq  M_1, \quad \forall (z, \xi) \in \overline{\Omega }\times \mathbb{S}^{2n-1}.
\end{align}

For any $(z, \xi) \in \overline{\Omega }\times \mathbb{S}^{2n-1}$, denote
\begin{align}\label{4.3}
W(z, \xi)=D_{\xi}u(z)-\langle \nu, \xi \rangle (- \varepsilon u + \varphi(z))+ \varepsilon^2 u^2+K|z|^2,
\end{align}
where $K$ is a large constant to be determined later, and $\nu$ is a $C^2(\overline{\Omega})$ extension of the outer unit normal vector field on $\partial \Omega$.

 Assume $W$ achieves its maximum at $(z_0, \xi_0) \in \overline{\Omega }\times \mathbb{S}^{2n-1}$. It is easy to see $D_{\xi_0} u (z_0) >0$. We claim $z_0 \in \partial \Omega$. Otherwise, if $z_0 \in \Omega$, we shall get a contradiction in the following.

 Firstly, we rotate the coordinates such that $\partial \bar{\partial}u (z_0)$ is diagonal. It is easy to see $\{G^{i\bar{j}} \}$ is diagonal. For fixed $\xi = \xi_0$, $W(z, \xi_0)$ achieves its maximum at the same point $z_0 \in \Omega$ and we can easily get at $z_0$,
\begin{align}\label{4.4}
0\ge G^{i\bar{i}} \partial_{i\bar{i}} W=&G^{i\bar{i}}\big[ \partial_{i\bar{i}}D_{\xi_0}u -<\nu, \xi_0>_{i\bar{i}} (- \varepsilon u + \varphi)- <\nu, \xi_0> (- \varepsilon u_{i\bar{i}} + \varphi_{i\bar{i}}) \notag \\
&\quad \quad- <\nu, \xi_0>_{i} (- \varepsilon u_{\bar{i}} + \varphi_{\bar{i}})- <\nu, \xi_0>_{\bar{i}} (- \varepsilon u_i + \varphi_i)\notag\\
&\quad \quad +2 \varepsilon^2 u_i u_{\bar{i}}+2\varepsilon^2 u u_{i\bar{i}}+K \big] \notag \\
=& D_{\xi_0} \alpha_{k-1} + \sum\limits_{l=0}^{k-2} D_{\xi_0} \alpha_l \frac{\sigma_l}{\sigma_{k-1}} + G^{i\bar{i}}\big[2 \varepsilon^2  u_i u_{\bar{i}} +  \langle\nu, \xi_0\rangle_{i} \varepsilon u_{\bar{i}} +  \langle\nu, \xi_0\rangle_{\bar{i}} \varepsilon u_{i}\big] \notag \\
&+  G^{i\bar{i}} u_{i\bar{i}} \big[ \varepsilon \langle \nu, \xi_0 \rangle +2\varepsilon^2 u \big]  \notag \\
&+ G^{i\bar{i}}\big[K  -\langle\nu, \xi_0\rangle_{i\bar{i}} (- \varepsilon u + \varphi) - \langle\nu, \xi_0\rangle \varphi_{i\bar{i}} - \langle\nu, \xi_0\rangle_{i} \varphi_{\bar{i}} - \langle\nu, \xi_0\rangle_{\bar{i}} \varphi_{{i}} \big]  \notag \\
\geq& -|D \alpha_{k-1}|  - \sum\limits_{l=0}^{k-2} |D \alpha_l| C(n,k,\inf \alpha_l) - (n-k+1) |D \langle \nu, \xi_0\rangle|^2 \notag \\
&-  C(n, k, \sum \sup \alpha_l) \big[1 +2M_0 \big]  \notag \\
&+ \frac{n-k+1}{k} \big[K  - |D^2 \langle \nu, \xi_0 \rangle| (M_0 + |\varphi|) - |D^2 \varphi| - 2 |D \langle \nu, \xi_0 \rangle| |D\varphi| \big] \notag \\
>& 0,
\end{align}
where $K$ is large enough, depending only on $n$, $k$, $\Omega$, $M_0$, $|\varphi|_{C^2}$ and $\alpha_l$. This is a contradiction.

So $z_0 \in \partial \Omega$. Then we continue our proof into the following three cases.

(a) If $\xi_0$ is normal at $z_0\in \partial \Omega,$ then
\begin{align}
W(z_0,\xi_0)= \varepsilon ^2 u^2+ K |z_0|^2 \le C. \notag
\end{align}
Then we can easily obtain \eqref{3.9}.

(b) If $\xi_0$ is non-tangential at $z_0\in \partial \Omega$, then we can write $\xi_0=\alpha \tau + \beta \nu$,
where $\tau \in \mathbb{S}^{2n-1}$ is tangential at $x_0$, that is $\langle\tau, \nu\rangle=0$, $\alpha =\langle \xi_0, \tau \rangle >0$, $\beta= \langle \xi_0, \nu \rangle <1$, and $\alpha^2+\beta^2=1$. Then we have
\begin{align}\label{3.12}
W(z_0, \xi_0)=&\alpha D_{\tau}u + \varepsilon ^2 u^2 + K |z_0|^2 \notag \\
\le &\alpha W(z_0, \xi_0)+(1-\alpha)(\varepsilon ^2 u^2 + K |z_0|^2),
\end{align}
so
\begin{align}
W(z_0, \xi_0)\le  \varepsilon ^2 u^2 + K |z_0|^2  \leq C. \notag
\end{align}
Then we can easily get \eqref{3.9}.

(c) If $\xi_0$ is tangential at $x_0\in \partial \Omega$, we may assume that the outer normal direction of $\Omega$ at $z_0$ is $(0,\cdots,0,1)$. By a rotation, we assume that $\xi_0=(1, \cdots, 0)=e_1$. Then we have
\begin{align}
0 \leq& D_{\nu}W(z_0, \xi_0)\\=&D_{\nu}D_1u-D_{\nu}\langle\nu, \xi_0\rangle(- \varepsilon u + \varphi)+ 2 u \cdot D_{\nu}u+K D_{\nu} |z_0|^2  \notag \\
\leq& D_{\nu}D_{1}u + C_1 \notag\\
=& D_{1}D_{\nu}u - D_1\nu_kD_ku + C_1.
\end{align}
By the boundary condition, we know
\begin{align}
 D_{1}D_{\nu}u =  D_{1}(- \varepsilon u + \varphi) \leq  D_1 \varphi.
\end{align}
Following the argument of \cite{L94}, we can get
\begin{align}
 - D_1\nu_k D_k u  \leq  - \kappa_{min} W(z_0, \xi_0) + C_2,
\end{align}
where $\kappa_{min}$ is the minimum principal curvature of $\partial \Omega$. So
\begin{align}
 W(z_0, \xi_0) \leq \frac{C_1 + |D \varphi|+ C_2}{\kappa_{min}}.
\end{align}
Then we can conclude \eqref{3.9}.

\end{proof}

\subsection{Reduce global second derivatives to double normal second derivatives on the boundary}

In this subsection, we reduce global second derivatives to double normal second derivatives on the boundary, following the ideas of Lions-Trudinger-Urbas \cite{LTU86} and Li \cite{L94}.

\begin{theorem} \label{th3.3}
Suppose $\Omega \subset \mathbb{C}^n$ is a $C^4$ strictly convex domain, $\alpha_l(z) \in C^2(\overline{\Omega})$ with $l=0, 1, \cdots, k-1$ are positive functions and $\varphi \in C^3(\partial \Omega)$, and $u \in C^4(\Omega)\cap C^3(\overline \Omega)$ is the $k$-admissible solution of the equation \eqref{1.2} with $\varepsilon >0$ sufficiently small, then we have
\begin{align}\label{3.16}
\sup_{\Omega}|D^2u|\leq C(1+\max_{\partial \Omega}|D_{\nu\nu} u|),
\end{align}
where $C$ depends on $n$, $k$, $\Omega$, $|\varphi|_{C^3}$, $\inf_\Omega \alpha_l$ and $|\alpha_l|_{C^2}$.
\end{theorem}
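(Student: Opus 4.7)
The plan is to follow the two-stage paradigm of Lions--Trudinger--Urbas \cite{LTU86} and Li \cite{L94}. \emph{Stage 1 (interior to boundary):} I would first establish $\sup_\Omega |D^2u|\leq C(1+\sup_{\partial\Omega}|D^2u|)$ by a maximum principle applied to a test function of the schematic form
\begin{equation*}
M(z,\xi) = D_{\xi\xi}u(z) + \phi(|Du|^2) + \psi(u) + K|z|^2, \qquad (z,\xi)\in\overline{\Omega}\times S^{2n-1},
\end{equation*}
with $\phi,\psi$ smooth and $K>0$ large, to be chosen. If the maximum is attained at an interior point $(z_0,\xi_0)$, rotate coordinates so $(u_{i\bar j})(z_0)$ is diagonal and apply the linearized operator $L=G^{i\bar j}\partial_i\bar\partial_j$ to $M(\cdot,\xi_0)$. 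Differentiating the equation twice in the real direction $\xi_0$, the concavity of $G$ from Lemma~\ref{lem2.5} absorbs the dangerous quadratic-in-$D^3u$ terms, so $L(D_{\xi_0\xi_0}u)$ is bounded below by something controllable. Combined with the lower bound $\sum G^{i\bar i}\geq (n-k+1)/k$ and the estimate on $\sum G^{i\bar j}u_{i\bar j}$ from Lemma~\ref{lem2.7}, along with the favorable second-order contribution of $L(\phi(|Du|^2))$ at a critical point, a suitable choice of $\phi,\psi,K$ forces $LM(z_0,\xi_0)>0$, a contradiction. Hence the maximum lies on $\partial\Omega\times S^{2n-1}$.

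\emph{Stage 2 (boundary tangential estimates):} At $z_0\in\partial\Omega$, decompose $\xi\in S^{2n-1}$ as $\xi=\tau+\mu\nu$ with $\tau\perp\nu$, so that $D_{\xi\xi}u = D_{\tau\tau}u + 2\mu D_{\tau\nu}u + \mu^2 D_{\nu\nu}u$; only the first two terms must be bounded by constants. For the mixed derivative, differentiating the Neumann condition along a smooth tangential extension of $\tau$ yields $D_\tau D_\nu u = -\varepsilon D_\tau u + D_\tau\varphi$ on $\partial\Omega$, which together with the gradient bound from Theorem~\ref{th3.2} gives $|D_{\tau\nu}u(z_0)|\leq C$. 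For the pure tangential derivative, I would use a barrier argument in a one-sided neighborhood of $z_0$ with an auxiliary function of the form
\begin{equation*}
\Psi(z) = \pm\bigl(D_{\tau\tau}u(z) + \text{linear corrections in }Du\bigr) - A\bigl(|z-z_0|^2 - \beta\rho(z)\bigr),
\end{equation*}
where $\rho$ is a strictly plurisubharmonic defining function of $\Omega$ (provided by strict convexity) and $A,\beta>0$ are large. The two-sided bounds in Lemma~\ref{lem2.7} together with the ellipticity and concavity of $G$ show $\Psi$ cannot attain an interior maximum near $z_0$, and the one-sided boundary comparison then yields $|D_{\tau\tau}u(z_0)|\leq C$.

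The main obstacle is the pure tangential estimate in Stage~2. Unlike the Dirichlet case, the Neumann condition gives no direct control of $D_{\tau\tau}u|_{\partial\Omega}$ through tangential differentiation of boundary data, so a barrier is unavoidable. The barrier must be calibrated so that the strict convexity of $\Omega$ generates a definite positive contribution from $L\rho$, large enough to absorb both the commutator terms produced by differentiating the extended normal field $\nu$ and the cross-terms generated by the $\alpha_l(z)$-pieces of $G$. Executing this balance requires careful use of the two-sided bounds on $\sum G^{i\bar i}$ and $\sum G^{i\bar j}u_{i\bar j}$ from Lemma~\ref{lem2.7} together with the concavity of Lemma~\ref{lem2.5}; this is the technically most delicate step of the argument.
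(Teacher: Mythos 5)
Your Stage 1 is essentially the paper's interior step, but your Stage 2 treatment of the pure tangential second derivative does not work as described, and that is where the real content of the theorem lies. A local barrier $\Psi=\pm\bigl(D_{\tau\tau}u+\cdots\bigr)-A\bigl(|z-z_0|^2-\beta\rho\bigr)$ is circular: once an interior maximum is excluded, the maximum of $\Psi$ sits either on $\partial B(z_0,\delta)\cap\Omega$, where $D_{\tau\tau}u$ is only controlled by the global interior bound you are in the middle of proving, or on $\partial\Omega\cap B(z_0,\delta)$, where $D_{\tau\tau}u$ is exactly the unknown boundary quantity. Barriers of this type are the right tool for the \emph{double normal} derivative (Section 4 of the paper), not for tangential ones. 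The paper instead maximizes the single global function $Q(z,\zeta)=D_{\zeta\zeta}u-v(z,\zeta)+|Du|^2+K|z|^2$ over $\overline{\Omega}\times\mathbb{S}^{2n-1}$; the correction $v(z,\zeta)=a_mD_mu+b$, built from the tangentially differentiated Neumann condition, is chosen precisely so that at a boundary maximum in a non-tangential direction one has the identity $Q(z_0,\zeta_0)=\alpha^2Q(z_0,\tau)+\beta^2Q(z_0,\nu)$, reducing everything to the tangential and normal cases. When the maximizing direction is tangential, the estimate is closed not by a barrier but by the Hopf lemma $D_\nu Q(z_0,\zeta_0)\ge 0$: commuting $D_\nu$ past $D_{\zeta_0\zeta_0}$, inserting the boundary condition, and using $-2(D_{\zeta_0}\nu^m)D_{\zeta_0}D_mu\le-2\kappa_{\min}Q(z_0,\zeta_0)+C(1+\max_{\partial\Omega}|D_{\nu\nu}u|)$ — here the strict convexity enters through the second fundamental form of $\partial\Omega$, not through a term $L\rho$ — one can solve for $Q(z_0,\zeta_0)$. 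Your proposal omits both the correction term $v$ and this Hopf-lemma mechanism.

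There is a second, more technical gap in your Stage 1. Concavity of $G$ alone gives $-G^{i\bar j,r\bar s}D_\zeta\partial_{i\bar j}u\,D_\zeta\partial_{r\bar s}u\ge 0$, but differentiating the equation twice also produces the terms $-2\sum_{l}D_\zeta\alpha_l\,G_l^{i\bar i}D_\zeta\partial_{i\bar i}u$, which are \emph{linear} in the a priori unbounded third derivatives. These must be absorbed by Cauchy--Schwarz against the concavity quadratic form, and this requires the quantitative strengthening exploited in the paper (concavity of $(\sigma_{k-1}/\sigma_l)^{1/(k-1-l)}$, yielding the lower bound $\sum_l\frac{k-1-l}{k-l}\frac{(D_\zeta\alpha_l)^2}{\alpha_l}G_l\ge-C_3$), not merely the qualitative statement of Lemma~\ref{lem2.5}. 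Your mixed-derivative bound $|D_{\tau\nu}u|\le C$ via tangential differentiation of the Neumann condition and Theorem~\ref{th3.2} is correct, but by itself it does not substitute for the two missing ingredients above.
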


\begin{proof}
Since $u$ is subharmonic, by the argument in \cite{L94}, we know that we only need to prove that
\begin{align}\label{3.17}
D_{\zeta\zeta}u(z)\le C (1+\max_{\partial \Omega} |D_{\nu \nu} u| ),  \quad \forall (z,\zeta)\in\bar{\Omega}\times \mathbb{S}^{2n-1}.
\end{align}
As the real case in \cite{LTU86}, we use the auxiliary function
\begin{align}
Q(z, \zeta)=D_{\zeta\zeta} u - v(z,\zeta) +|Du|^2 + K|z|^2,
\end{align}
where $v(z,\zeta)=2\langle\zeta, \nu\rangle \langle\zeta', D\varphi - \varepsilon D u - D_m u D\nu^m\rangle= a_m D_m u +b$, $\nu = (\nu^1, \nu^2, \cdots, \nu^{2n}) \in \mathbb{S}^{2n-1}$ is a $C^3(\overline{\Omega})$ extension of the outer unit normal vector field on $\partial \Omega$, $\zeta'=\zeta-\langle\zeta, \nu\rangle \nu$, $ a_m = - 2 \langle\zeta, \nu\rangle \langle\zeta', D \nu^m\rangle- 2\varepsilon \langle\zeta , \nu\rangle (\zeta')^m$, $b = 2 \langle\zeta, \nu \rangle \langle\zeta', D \varphi\rangle$, and $K>0$ is to be determined later.

For any $z \in \Omega$, we rotate the coordinates such that $\partial \bar{\partial} u (z)$ is diagonal, and then $\{G^{i\bar{j}} \}$ is diagonal. For any fixed $\zeta \in \mathbb{S}^{2n-1}$, we have
\begin{align}
G^{i\bar{i}} \partial_{i\bar{i}}Q=& G^{i\bar{i}} [\partial_{i\bar{i}}D_{\zeta\zeta}u-\partial_{i\bar{i}} a_m D_m u - a_m \partial_{i\bar{i}}D_m u -  \partial_i a_m \partial_{\bar{i}} D_m u -  \partial_{\bar{i}} a_m \partial_i D_m u\notag \\
& \qquad - \partial_{i\bar{i}} b+ 2 \partial_i D_m u \partial_{\bar{i}} D_m u +  2 D_m u \partial_{i\bar{i}} D_m u + K ] \notag \\
=& D_{\zeta\zeta}\alpha_{k-1} - 2\sum_{l=0}^{k-2} D_\zeta\alpha_l {G_l}^{ii} D_\zeta \partial_{i\bar{i}} u - \sum_{l=0}^{k-2} D_{\zeta\zeta}\alpha_{l} G_l - G^{i\bar{j},r\bar{s}} D_\zeta \partial_{i\bar{j}} u D_\zeta \partial_{r\bar{s}} u  \notag \\
&+ G^{i\bar{i}} [K - \partial_{i\bar{i}} a_m D_m u -\partial_{i\bar{i}} b ]+ G^{i\bar{i}} [2 \partial_i D_m u \partial_{\bar{i}} D_m u - \partial_i a_m \partial_{\bar{i}} D_m u -  \partial_{\bar{i}} a_m \partial_i D_m u]  \notag  \\
&+(-a_m+ 2 D_m u) [D_m \alpha_{k-1}- \sum_{l=0}^{k-2} D_m\alpha_l {G_l}]  \notag \\
\geq& G^{ii} [K -C_2]-C_1- 2\sum_{l=0}^{k-2} D_\zeta\alpha_l {G_l}^{i\bar{i}}  D_\zeta \partial_{i\bar{i}} u -G^{i\bar{j},r\bar{s}} D_\zeta \partial_{i\bar{j}} u D_\zeta \partial_{r\bar{s}} u   \notag \\
\geq& \frac{n-k+1}{k}[K -C_2]-C_1-C_3  >0,
\end{align}
where $K$ is large enough, and we used the fact
\begin{align}
&- 2\sum_{l=0}^{k-2} D_\zeta\alpha_l {G_l}^{i\bar{i}}  D_\zeta \partial_{i\bar{i}} u -G^{i\bar{j},r\bar{s}} D_\zeta \partial_{i\bar{j}} u D_\zeta \partial_{r\bar{s}} u \notag \\
\geq& - 2\sum_{l=0}^{k-2} D_\zeta\alpha_l {G_l}^{i\bar{i}}  D_\zeta \partial_{i\bar{i}} u - \sum_{l=0}^{k-2} \alpha_l {G_l}^{i\bar{j},r\bar{s}} D_\zeta \partial_{i\bar{j}} u D_\zeta \partial_{r\bar{s}} u  \notag \\
=&  - 2\sum_{l=0}^{k-2} D_\zeta\alpha_l {G_l}^{i\bar{i}}  D_\zeta \partial_{i\bar{i}} u \notag \\
 &- \sum_{l=0}^{k-2} \alpha_l \Big[\frac{{k - 1 - l}}{{\left( {\frac{{\sigma _{k - 1} }}{{\sigma _l }}} \right)^{\frac{{k - l}}{{k - 1 - l}}} }}\frac{{\partial ^2 \left( {\frac{{\sigma _{k - 1} }}{{\sigma _l }}} \right)^{\frac{1}{{k - 1 - l}}} }}{{\partial u_{i\bar{j}} \partial u_{r\bar{s}} }} - \frac{{k - l}}{{k - 1 - l}}\frac{1}{{G_l }} {G_l} ^{i\bar{j}} {G_l} ^{r\bar{s}}\Big]D_\zeta \partial_{i\bar{j}} u D_\zeta \partial_{r\bar{s}} u  \notag \\
\geq& \sum_{l=0}^{k-2} \frac{{k - 1 - l}}{{k - l}}\frac{{(D_\zeta\alpha_l ) ^2 }}{{\alpha _l }}G_l \notag \\
\geq& -C_3. \notag
\end{align}
So $\max\limits_{\overline{\Omega}} Q(z, \zeta)$ attains its maximum on $\partial \Omega$. Hence $\max\limits_{\overline{\Omega} \times \mathbb{S}^{2n-1}} Q(z, \zeta)$ attains its maximum at some point $z_0 \in \partial \Omega$ and some direction $\zeta_0 \in \mathbb{S}^{2n-1}$.

Then we continue our proof in the following two cases following the idea of \cite{L94}.

(a) If $\zeta_0$ is non-tangential at $z_0\in \partial \Omega$.

Then we can write $\zeta_0=\alpha\tau+\beta \nu$, where $\tau \in \mathbb{S}^{2n-1}$ is tangential at $z_0$, that is $\langle\tau, \nu\rangle=0$, $\alpha =\langle\zeta_0, \tau\rangle $, $\beta=\langle\zeta_0, \nu\rangle \ne 0$, and $\alpha^2+\beta^2=1$. Then we have
\begin{align}
D_{\zeta_0 \zeta_0} u (z_0) =& \alpha^2 D_{\tau \tau} u (z_0) + \beta^2 D_{\nu \nu} u (z_0)+ 2 \alpha \beta D_{\tau \nu} u (z_0) \notag \\
=& \alpha^2 D_{\tau \tau} u (z_0) + \beta^2 D_{\nu \nu} u (z_0) \notag\\
 &+ 2  (\xi_0 \cdot \nu)[\xi_0 - (\xi_0 \cdot \nu) \nu] [D \varphi - \varepsilon D u- D_m u D \nu^m], \notag
\end{align}
hence
\begin{align}
Q(z_0, \zeta_0) = \alpha^2 Q(z_0, \tau) + \beta^2 Q(z_0, \nu).
\end{align}
From the definition of $Q(z_0, \zeta_0)$, we know
\begin{align}
Q(z_0, \zeta_0)\le Q(z_0, \nu) \leq C(1+\max_{\partial \Omega} |D_{\nu \nu} u| ),
\end{align}
and we can prove \eqref{3.17}.

(b) If $\zeta_0$ is tangential at $z_0\in \partial \Omega$.

Then we have by Hopf Lemma
\begin{align}
0 \leq D_{\nu} Q(z_0, \zeta_0)=& D_{\nu}D_{\zeta_0 \zeta_0} u -D_{\nu}a_m D_m u -a_m D_{\nu} D_m u \notag \\
&- D_{\nu} b + 2 D_m u D_{\nu} D_m u+K D_{\nu} |z|^2  \notag \\
\leq& D_{\nu}D_{\zeta_0 \zeta_0} u+[ 2 D_m u- a_m]D_{\nu} D_m u + C_3.
\end{align}

By the boundary condition, we know
\begin{align}
D_{\nu}D_{\zeta_0 \zeta_0} u =&D_{\zeta_0 \zeta_0} D_{\nu}u -(D_{\zeta_0 \zeta_0}{\nu^m})D_m u-2 (D_{\zeta_0}{\nu^m}) D_{\zeta_0} D_m u \notag \\
=&D_{\zeta_0 \zeta_0} (- \varepsilon u + \varphi) -(D_{\zeta_0 \zeta_0}{\nu^m})D_m u-2 (D_{\zeta_0}{\nu^m}) D_{\zeta_0} D_m u \notag \\
\leq&- \varepsilon Q(z_0, \zeta_0)+C_4 -2 (D_{\zeta_0}{\nu^m}) D_{\zeta_0} D_m u.
\end{align}
Following the argument of \cite{L94}, we can get
\begin{align*}
&|D_{\nu} D_m u | \leq  C_5  (1+\max_{\partial \Omega} |D_{\nu \nu} u| ),\\
&-2 (D_{\zeta_0}{\nu^m}) D_{\zeta_0} D_m u \leq -  2 \kappa_{min} Q(z_0, \zeta_0)+C_6 (1+\max_{\partial \Omega} |D_{\nu \nu} u| ).
\end{align*}
So
\begin{align}
Q(z_0, \zeta_0) \leq \frac{C_{3} + C_{4}+ (2|Du| + |a_m|)C_{5}+ C_{6}}{2\kappa_{min}+ \varepsilon }(1+\max_{\partial \Omega} |D_{\nu \nu} u| ).
\end{align}
Then we can easily get \eqref{3.17}.

The proof is finished.
\end{proof}

\section{Proof of Theorem \ref{th1.3} and Theorem \ref{th1.4}}

 In this section, we prove Theorem \ref{th1.3} and Theorem \ref{th1.4}.

\subsection{Estimate of double normal second derivatives on boundary for $k =n$}

\begin{theorem} \label{th4.1}
Suppose $\Omega \subset \mathbb{C}^n$ is a $C^4$ strictly convex domain, $k=n$, $\alpha_l(z) \in C^2(\overline{\Omega})$ with $l=0, 1, \cdots, n-1$ are positive functions and $\varphi \in C^3(\partial \Omega)$, and $u \in C^4(\Omega)\cap C^3(\overline \Omega)$ is the plurisubharmonic solution of the equation \eqref{1.2} with $\varepsilon >0$ sufficiently small, then we have
\begin{align}\label{4.1}
\max_{\partial \Omega} |D_{\nu \nu} u|  \leq M_2,
\end{align}
where $M_2$ depends on $n$, $\Omega$, $|\varphi|_{C^3}$, $\inf _\Omega\alpha_l$ and $|\alpha_l|_{C^2}$..
\end{theorem}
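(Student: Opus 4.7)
The strategy is to bound $D_{\nu\nu}u$ pointwise at an arbitrary boundary point $z_0 \in \partial\Omega$, following the scheme of Li \cite{L94} for the complex Monge--Amp\`{e}re Neumann problem, with modifications to absorb the mixed terms $\sum_{l=0}^{n-2}\alpha_l\sigma_l$ appearing in \eqref{1.2}. First I would fix $z_0$ and rotate the complex coordinates so that the outer unit normal at $z_0$ points along the positive $t_{2n}$ direction (the imaginary axis of $z_n$); then $D_{\nu\nu}u(z_0)=D_{2n,2n}u(z_0)$, and from the identity in Remark \ref{rmk1.2},
\[
D_{\nu\nu}u(z_0) = 4u_{n\bar{n}}(z_0) - D_{nn}u(z_0).
\]

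The lower bound $D_{\nu\nu}u(z_0)\geq -M_2$ is the easier direction. Plurisubharmonicity of $u$ forces $u_{n\bar{n}}(z_0)\geq 0$, so the task reduces to bounding the purely tangential second derivative $D_{nn}u(z_0)$ from above. This is achieved by differentiating the Neumann condition $D_\nu u=-\varepsilon u+\varphi$ twice along $\partial\Omega$, combined with the $C^0$ and $C^1$ estimates from Theorems \ref{th3.1} and \ref{th3.2} and a barrier built from the strictly convex defining function of $\Omega$.

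The upper bound $D_{\nu\nu}u(z_0)\leq M_2$ is where the assumption $k=n$ is used essentially. I would further rotate $(z_1,\ldots,z_{n-1})$ to diagonalize the tangential sub-block $(u_{\alpha\bar{\beta}})_{\alpha,\beta<n}$ at $z_0$ and expand the determinant along the $n$-th row and column:
\[
\sigma_n(\partial\bar{\partial}u)(z_0) = u_{n\bar{n}}(z_0)\prod_{\alpha<n}u_{\alpha\bar{\alpha}}(z_0) - \sum_{\alpha<n}|u_{\alpha\bar{n}}(z_0)|^{2}\prod_{\beta<n,\,\beta\neq\alpha}u_{\beta\bar{\beta}}(z_0).
\]
By Lemma \ref{lem2.6} together with the gradient estimate, $\sigma_n(\partial\bar{\partial}u)(z_0)$ is bounded from above. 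The mixed entries $u_{\alpha\bar{n}}(z_0)$ split, via Remark \ref{rmk1.2}, into a tangential-normal piece (controlled by differentiating the Neumann condition once tangentially) and a tangential-tangential piece (to be absorbed together with $D_{nn}u$ into a single tangential second-derivative estimate on $\partial\Omega$). Solving the above identity for $u_{n\bar{n}}(z_0)$ then reduces the whole matter to producing a uniform positive lower bound on the tangential sub-product $\prod_{\alpha<n}u_{\alpha\bar{\alpha}}(z_0)$, which I would obtain by a barrier construction built from the strictly convex defining function of $\Omega$ and an auxiliary term $A|z|^{2}$, exploiting the ellipticity and concavity of $G$ from Lemma \ref{lem2.5} together with the trace estimates of Lemma \ref{lem2.7}.

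The step I expect to be the main obstacle is the uniform positive lower bound on $\prod_{\alpha<n}u_{\alpha\bar{\alpha}}(z_0)$, together with the simultaneous absorption of the lower-order contributions $\alpha_l\sigma_l$ with $l<n-1$ into the barrier. In the pure complex Monge--Amp\`ere case Li works directly with the determinant, but here the linearized operator $G^{i\bar{j}}$ picks up additional terms from differentiating each $\alpha_l\sigma_l/\sigma_{n-1}$, so matching signs in the resulting maximum principle argument will require a careful tuning of the constants in the barrier, guided by the bounds $\sum G^{i\bar{i}}\geq \frac{1}{n}$ and $\sum G^{i\bar{j}}u_{i\bar{j}}\geq \inf_\Omega\alpha_{n-1}$ from Lemma \ref{lem2.7}.
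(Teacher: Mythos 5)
Your proposal does not follow the paper's route, and as it stands it contains a circularity that I do not see how to repair. The essential difficulty of the Neumann problem, in contrast to the Dirichlet problem, is that pure tangential second derivatives of $u$ on $\partial\Omega$ are \emph{not} determined by the boundary data: differentiating $D_\nu u=-\varepsilon u+\varphi$ twice tangentially yields a relation for the third-order quantity $D_{\tau\tau}(D_\nu u)$, not a bound on $D_{\tau\tau}u$. At this stage the only available control on tangential second derivatives at the boundary is Theorem \ref{th3.3}, i.e.\ a bound by $C(1+\max_{\partial\Omega}|D_{\nu\nu}u|)$ --- exactly the quantity Theorem \ref{th4.1} must estimate. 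Your lower bound therefore stalls at the claim that $D_{nn}u(z_0)$ (a pure tangential derivative) can be bounded above from the boundary condition; it cannot. The same issue undermines your upper bound: in the determinant expansion, $\sigma_n=\sum_l\alpha_l\sigma_l$ is for $l\ge 1$ built from second derivatives, so Lemma \ref{lem2.6} plus the gradient estimate does \emph{not} bound it (it only bounds the quotient $\sigma_n/\sigma_{n-1}$), and the entries $u_{\alpha\bar n}$ and the tangential product $\prod_{\alpha<n}u_{\alpha\bar\alpha}$ contain tangential--tangential pieces controlled only in terms of $\max|D_{\nu\nu}u|$. The expansion-along-the-normal-row strategy, with a lower bound on the tangential minor, is the Dirichlet-problem technique of \cite{CNS84,CNSK85}, where tangential second derivatives on the boundary are free; it does not transplant to the Neumann setting.

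The paper's actual argument (following \cite{L94}, after \cite{LTU86}) avoids tangential second derivatives entirely. After a local holomorphic change of coordinates flattening $\partial\Omega$ to second order at $z_0$, one forms the \emph{first-order} barrier $h=4\,\mathrm{Re}\big[\langle\partial_{z'} u,\partial_{z'} r_0\rangle-\sum_m A_m\partial_{z_m'}u\big]-\phi(z',u)+Kr-K_1\mathrm{Re}(z_n')$, which vanishes at $z_0$, is $\le 0$ on the boundary of a small half-ball by the Neumann condition and the choice of $K_1$, and satisfies $F^{i\bar j}\partial_{z_i'\overline{z_j'}}h\ge Kk_0\tfrac1n-C\ge 0$ for $K$ large; here $k=n$ enters because plurisubharmonicity makes each term $G^{p\bar p}u_{p\bar p}\ge 0$, so \eqref{2.12} bounds them individually and the commutator terms from the coordinate change are harmless. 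The maximum principle then places the maximum of $h$ at $z_0$, and $0\le D_\nu h(z_0)\le D_{\nu\nu}u(z_0)+C$ gives the lower bound; the sign-reversed barrier gives the upper bound. To fix your write-up you should discard both halves of your argument and run this single construction, applied to the boundary operator itself, in both sign configurations.
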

\begin{proof}
Since $\Omega$ is a $C^4$ strictly convex domain, there is a strictly plurisubharmonic defining function $r \in C^4(\overline\Omega)$ such that
\begin{align}
&|D r| =1, \quad \text{ on } \partial \Omega, \\
&\partial \bar{\partial} r \geq k_0 I_n, \quad \text{ in } \overline{\Omega};
\end{align}
where $k_0$ is a positive constant depending only on $\Omega$, and $I_n$ is the $n \times n$ identity matrix.

Let $z_0\in\partial \Omega$ be an arbitrary point. By a shift and a rotation of the coordinates $\{z_1, \cdots, z_n\}$, we can assume that $z_0=0$, $\partial_{z_i} r (0)=0$ for $i<n$, and $D_{t_n} r(0) =-1$, $D_{t_{2n}} r(0) =0$. In $\overline{B}(0,\delta)\bigcap\overline{\Omega}$, a sufficiently small neighborhood of $z_0$, we can get by the Taylor expansion of $r$ up to second order
\begin{equation}
r(z)=-Re( z_n-\sum_{i,j=1}^n a_{ij} z_i z_j)+\sum_{i,j=1}^n  b_{i\bar{j}}z_i{\overline{z_j}}+O(|z|^3),
\end{equation}
where $\{ b_{i\bar{j}} \} = \partial \bar{\partial} r(0)$ is positive definite. We now introduce new coordinates $z'= \psi(z)$ of the form
\begin{equation}
z_i'= z_i, \quad \text{ for } i <n; \quad z_n'=z_n-\sum_{i,j=1}^n a_{ij} z_i z_j.
\end{equation}
In $\psi(\overline{B}(0,\delta)\bigcap\overline{\Omega})$, we have
\begin{equation}
r(z)|_{z = \psi^{-1}(z')}=-Re z_n'+\sum_{i,j=1}^n  b_{i\bar{j}}z_i'{\overline{z_j'}}+O(|z'|^3).
\end{equation}
Denote
\begin{align*}
&r_0(z')=-Re z_n'+\sum_{ij=1}^n  b_{i\bar{j}}z_i'{\overline{z_j'}}; \\
&B_j(z') = \big(\sum_{i=1}^n  [a_{ij} + a_{ji}] z_i \big)|_{z = \psi^{-1}(z')}, \quad j =1, \cdots, n; \\
&A_m(z') = B_m(1- B_n)^{-1} \frac{\partial r_0(z')}{\partial \overline{z_n'}}, \quad m =1, \cdots, n-1.
\end{align*}
It is easy to know $|B_j| = O(|z'|)$ for $j =1, \cdots, n$, and $A_j$ is holomorphic in $z' \in \psi(\overline{B}(0,\delta)\bigcap\overline{\Omega})$.
Following the calculations in \cite{L94}, we know the Neumann boundary condition in $z'$ coordinates
\begin{equation}
4 Re \left(<\partial_{z'} u,\partial_{z'} r_0>-\sum_{m=1}^{n-1}A_m\partial_{z_m'}u\right)=\phi(z', u)+O(|z'|^2)
\end{equation}
where $\phi(z', u)=|1-B_n(z')|^{-2}(-\varepsilon u+\varphi(z))|_{z = \psi^{-1}(z')}$.

Following the idea of \cite{L94}, we choose the auxiliary function
\begin{equation}
h(z')=4 Re[<\partial_{z'} u,\partial_{z'} r_0>-\sum_{m=1}^{n-1}A_m\partial_{z_m'}u]-\phi(z',u)+K r(z)|_{z = \psi^{-1}(z')}-K_1 Re(z_n'),
\end{equation}
where $K_1>0$ is sufficiently large such that
\begin{equation}\label{4.9}
h<0,\quad on\quad \psi\big(\partial(B(0, \delta)\cap \Omega)\setminus \partial \Omega\big),
\end{equation}
and
\begin{equation}\label{4.10}
h=-K_1 Re(z_n')+O(|z'|^2) \le 0\quad on\quad \psi\big(\partial(B(0, \delta)\cap \Omega)\cap\partial \Omega\big).
\end{equation}

Let
\begin{align}
G^{i \bar{j}} = \frac{\partial G }{\partial u_{z_i \overline{z_j}}}, \quad F^{i \bar{j}} = \frac{\partial  G}{\partial u_{z_i' \overline{z_j'}}}. \notag
\end{align}
It is easy to see
\begin{align}
F^{i \bar{j}} = G^{p \bar{q}} \big(\frac{\partial z_i'}{\partial z_p}\big) \overline{\big(\frac{\partial z_j'}{\partial z_q}\big)}. \notag
\end{align}

For any $z' \in \psi({B}(0,\delta)\bigcap{\Omega})$, we can get
\begin{align}
F^{i \bar{j}} \partial_{z_i' \overline{z_j'}} h =&  2F^{i\bar{j}}\partial_{z_i' \overline{z_j'}}\big(\partial_{z_m'} u \partial_{\overline{z_m'}} r_{0} + \partial_{\overline{z_m'}} u \partial_{z_m'} r_{0}\big) - 2F^{i\bar{j}}\partial_{z_i' \overline{z_j'}}\big(\sum_{m=1}^{n-1}A_m\partial_{z_m'}u+\overline{A_m} \partial_{\overline{z_m'}}u\big) \notag \\
&-F^{i \bar{j}} \partial_{z_i' \overline{z_j'}} \phi + K F^{i \bar{j}} \partial_{z_i' \overline{z_j'}} r  \notag \\
\geq&  2F^{i\bar{j}}\big(\partial_{z_m'} u_{z_i' \overline{z_j'}} \partial_{\overline{z_m'}} r_{0} + \partial_{\overline{z_m'}} u_{z_i' \overline{z_j'}} \partial_{z_m'} r_{0} + u_{z_m' \overline{z_j'}} \partial_{z_i' \overline{z_m'}} r_{0} +  u_{\overline{z_m'}z_i'} \partial_{z_m'\overline{z_j'}} r_{0}\big) \notag \\
&- 2\sum_{m=1}^{n-1}F^{i\bar{j}}\big(A_m\partial_{z_m'}u_{z_i' \overline{z_j'}}+\overline{A_m} \partial_{\overline{z_m'}}u_{z_i' \overline{z_j'}} + \partial_{z_i'}A_m u_{z_m' \overline{z_j'}}+\overline{\partial_{z_j'}A_m} u_{z_i' \overline{z_m'}}\big) \notag \\
& -G^{p \bar{q}} \partial_{z_p \overline{z_q}} \phi+ K G^{p \bar{q}} \partial_{z_p \overline{z_q}} r  \notag \\
\geq&  2G^{p\bar{q}}\big( u_{z_m' \overline{z_q}} \partial_{z_p \overline{z_m'}} r_{0} +  u_{\overline{z_m'}z_p} \partial_{z_m'\overline{z_q}} r_{0}\big)- 2\sum_{m=1}^{n-1}G^{p\bar{q}}\big( \partial_{z_p}A_m u_{z_m' \overline{z_q}}+\overline{\partial_{z_q}A_m} u_{z_p \overline{z_m'}}\big) \notag \\
& -C_7 + K k_0 \sum_{p=1}^n G^{p \bar{p}} \notag \\
\geq&  K k_0 \frac{n-k+1}{k} -C_8,
\end{align}
where $G^{p\bar{q}}u_{z_k' \overline{z_q}} = G^{p\bar{q}}u_{z_m \overline{z_q}} \frac{\partial z_m}{\partial z_k'}$ and $G^{p\bar{q}}u_{z_p \overline{z_k'}} = G^{p\bar{q}}u_{z_p \overline{z_m}} \overline{\big(\frac{\partial z_m}{\partial z_k'}\big)}$ are bounded by rotating the coordinates $\{z_1, \cdots, z_n\}$ such that $\partial_z \overline{\partial_z } u$ is diagonal.

Taking $K$ large enough, we can have $ G^{i \bar{j}} \partial_{z_i' \overline{z_j'}} h \geq 0$ in $\psi(B(0,\delta)\bigcap \Omega)$. By the maximum principle, we know $h(z')$ achieves its maximum at $\psi(\partial (B(0,\delta)\bigcap \Omega))$, and by \eqref{4.9} and \eqref{4.10} the maximum is attained at $z'=0$. Hence $h(z')|_{z' = \psi(z)}$ achieves its maximum at $z_0=0$.
Thus
\begin{align}
0\le D_{\nu}h(0)\le D_{\nu\nu}u(z_0)+C_{9}.
\end{align}
So we have $D_{\nu\nu}u(z_0) \ge -C_{9}.$

The same argument for
\begin{align}
\tilde{h}(z')=4Re[<\partial_{z'} u, \partial_{z'} r_0>-\sum_{m=1}^{n-1}A_m\partial_{z_m'}u]- \phi(z', u)- K r(z)|_{z = \psi^{-1}(z')} + K_1 Re(z_n') \notag
\end{align}
can give
\begin{align}
D_{\nu\nu}u(z_0) \le C_{10}.
\end{align}
This completes the estimates of the double normal derivative on the boundary.
\end{proof}

\subsection{Proof of Theorem \ref{th1.3}}

For $k=n$ and $\varepsilon >0$ sufficiently small, we have established the global $C^2$ estimates for the plurisubharmonic solution of the Neumann problem of mixed complex Hessian equation \eqref{1.2} in Section 3 and Subsection 4.1. By the global $C^2$ a priori estimates, we obtain that the equation \eqref{1.2} are uniformly elliptic in $\overline \Omega$.
Due to the concavity of the operator $G$, we can get the global H\"{o}lder estimates of second derivative following the discussions in \cite{LT86}, that is, we can get
\begin{align}
|u|_{C^{2,\alpha}(\overline \Omega)} \leq C,
\end{align}
where $C$ and $\alpha$ depend on $n$, $\Omega$, $\varepsilon$, $\inf \alpha_l$, $|\alpha_l|_{C^2}$ and $|\varphi|_{C^3}$. Then the $C^{3,\alpha}(\overline \Omega)$ estimates hold by differentiating the equation \eqref{1.2} and applying the Schauder theory for linear, uniformly elliptic equations.

Applying the method of continuity (see \cite{GT}, Theorem 17.28), we can show the existence of the  plurisubharmonic solution, and the solution is unique by Hopf lemma. By the standard regularity theory of uniformly elliptic partial differential equations, we can obtain the high order regularity.

\subsection{Proof of Theorem \ref{th1.4}}

By the argument in Subsection 4.2, we know there exists a unique plurisubharmonic solution $u^\varepsilon \in C^{3, \alpha}(\overline \Omega)$ to \eqref{1.2} for any small $\varepsilon >0$. Let $v^\varepsilon =u^\varepsilon - \frac{1}{|\Omega|} \int_\Omega u^\varepsilon$, and it is easy to know $v^\varepsilon$ satisfies
\begin{align}
\left\{ \begin{array}{l}
\sigma _n (\partial \bar{\partial} v^\varepsilon) = \sum\limits_{l=0}^{n-1} \alpha_l \sigma _l (\partial \bar{\partial}  v^\varepsilon),  \quad \text{in} \quad \Omega,\\
D_\nu (v^\varepsilon) = - \varepsilon v^\varepsilon - \frac{1}{|\Omega|} \int_\Omega \varepsilon u^\varepsilon + \varphi(x),\quad \text{on} \quad \partial \Omega.
 \end{array} \right.
\end{align}
By the global gradient estimate \eqref{3.7}, it is easy to know $\varepsilon \sup |D u^\varepsilon | \rightarrow 0$. Hence there is a constant $c$ and a function $v \in C^{2}(\overline \Omega)$, such that $-\varepsilon u^\varepsilon \rightarrow c$, $-\varepsilon v^\varepsilon \rightarrow 0$, $-\frac{1}{|\Omega|} \int_\Omega \varepsilon u^\varepsilon \rightarrow c$ and $v^\varepsilon \rightarrow v$ uniformly in $C^{2}(\overline \Omega)$ as $\varepsilon \rightarrow 0$. It is easy to verify that $v$ is a plurisubharmonic solution of
\begin{align}
\left\{ \begin{array}{l}
\sigma _n (\partial \bar{\partial}  v) = \sum\limits_{l=0}^{n-1} \alpha_l \sigma _l (\partial \bar{\partial}  v),  \quad \text{in} \quad \Omega,\\
D_\nu v = c + \varphi(x),\qquad \text{on} \quad \partial \Omega.
 \end{array} \right.
\end{align}
If there is another plurisubharmonic function $v_1 \in C^{2}(\overline \Omega)$ and another constant $c_1$ such that
\begin{align}
\left\{ \begin{array}{l}
\sigma _n (\partial \bar{\partial}  v_1) = \sum\limits_{l=0}^{n-1} \alpha_l \sigma _l (\partial \bar{\partial}  v_1),  \quad \text{in} \quad \Omega,\\
D_\nu v_1= c_1 + \varphi(x),\qquad \text{on} \quad \partial \Omega.
 \end{array} \right.
\end{align}
Applying the maximum principle and Hopf Lemma, we can know $c = c_1$ and $v - v_1$ is a constant.
By the standard regularity theory of uniformly elliptic partial differential equations, we can obtain the high oder regularity.

\end{document}